\documentclass[twoside,11pt]{article}
\usepackage{amsmath,amssymb,amsthm,amsfonts,mathrsfs,wasysym,latexsym,times,lineno, subfigure,color,booktabs}
\usepackage{amsmath,amsfonts}
\usepackage{multirow}
\usepackage{array}
\usepackage{amsthm}
\usepackage{graphicx}
\usepackage{color}
\usepackage{multicol}
\usepackage{float}


\topmargin=-0.1in \oddsidemargin3mm \evensidemargin3mm
\textheight220mm \textwidth160mm

\newtheorem{lemma}{Lemma}[section]

\newtheorem{theorem}{Theorem}[section]

\setcounter{footnote}{-1}
  \date{}
\title{Exact Phase Transitions of Model RB with Slower-Growing Domains}

\author{Jun Liu$^{1}$, \ \ Ke Xu$^{2}$,\ \ Guangyan Zhou$^{3,*}$\\
\small $^1$Department of Mathematics, Taiyuan University of Technology, Taiyuan, China\\
\small $^2$ State Key Laboratory of Software Development Environments, Beihang University, China\\[-0.8ex]
\small $^3$Department of Mathematics, Beijing Technology and Business University,  China\\
 }
\begin{document}
\maketitle

\begin{abstract}
The second moment method has always been an effective tool to lower bound the satisfiability threshold of many random constraint satisfaction problems.
However, the calculation is usually hard to carry out and as a result, only some loose results can be obtained. In this paper, based on a delicate analysis which fully exploit the power of the second moment method, we prove that random RB instances can exhibit exact phase transition under more relaxed conditions, especially slower-growing domain size. These results are the best by using the second moment method, and new tools should be introduced for any better results.

 {\bf Keywords:} The second moment method; The central limit theorem; Phase transition;  Model RB.
\end{abstract}
\footnotetext[1]{Corresponding author. jvnliu@gmail.com, kexu@nlsde.buaa.edu.cn, gyzhou76@btbu.edu.cn.

 This research was supported
by National Natural Science Fund of China (Grant No.61702019). }
\section{Introduction}
The study of random constraint satisfaction problems (CSPs) has received tremendous ideas from combinatorics, computer science and statistical physics. Random CSPs contain a large set of variables which interact through a large set of constraints, where each variable ranges over a domain and a configuration (solution) to all the variables should satisfy all of the constraints. A fundamental question in the study of random CSPs is the average-case computational complexity of solving ensembles of CSPs. Great amount of theoretical and algorithmic work has been devoted to establish and locate the satisfiability threshold, and studies show that complexity attains the maximum at the SAT-UNSAT transition.

Many of the studied CSP models (such as random $k$-SAT, graph coloring) have fixed domain size, constraint length, and the number of constraints is linear  compared with the number of variables. In recent years, a lot of attention has been paid to the study of CSPs with growing domains or constraint length (\cite{xu2000,Frieze2005,FS2011,FSX2012}). For instance, random
$k$-SAT model with moderately growing $k$ has been proved to have a tight
threshold under the condition that $k-\log_2n\rightarrow\infty$ \cite{Frieze2005}, which can be relaxed as $k\geq(\frac12+\theta)\log_2n$ for any fixed $\theta>0$
\cite{liu2012}. Model RB is a standard prototype CSP model with growing domains revised from the Model B \cite{smith1996}. The proposal of this model is to overcome the trivial asymptotic insolubility of Model B. Model RB has been proved to have sharp SAT-UNSAT phase transition and exact  threshold points, and can generate hard instances in the phase transition region \cite{xu19,xu2006}. In addition, it was proved that model RB has a clustering transition but no condensation transition\cite{zhao2012,xuwei2015}. Moreover, randomly generated forced  RB instances with one hidden solution are proved to have both similar distribution of solutions and hardness property with unforced satisfiable RB instances \cite{xu2007}. Benchmarks based on the exact phase transitions of model RB (e.g. BHOSLIB) have been extensively used in algorithm research and in various algorithm competitions (e.g. CSP, SAT , MVC and MaxSAT), and the results confirm the hardness of these benchmarks (online at www.nlsde.buaa.edu.cn/~kexu/).

In this paper, we revisit the satisfiability threshold of model RB, and obtain new results based on techniques build upon the second moment method.  Precisely, we show that model RB can exhibit sharp phase transitions under more relaxed restrictions, especially a slower-growing domain size, and such results will greatly simplify the generation of satisfiable benchmarks, thus help the evaluation of various CSP algorithms. Technically, we get a more precise estimate of the upper bound of ${\mathbb{E}[X^{2}]}/{\mathbb{E}[X]^{2}}$, which has the form $\sum_{\omega=0}^{n}{n \choose \omega}p_{n}^{\omega}(1-p_{n})^{n-\omega}f_{n}(\omega/n)^{m}$, by giving a delicate partition of integers in $[0,n]$ and then analysing the monotonous or asymptotic behaviors in each interval. It is worth mention that our results are the best under the second moment method, since it fails to give nontrivial results once the conditions are further relaxed (see Claim 5.2), thus new tools other than the second moment method will be needed to entail better results.


\section{Background and Motivations}

Generally, a CSP is a triple $\langle X, D, C \rangle$, where $X = \{x_{1},x_{2},\ldots,x_{n}\}$ is a set of variables; $D = \{D_{1},D_{2},\cdots,D_{n}\}$ is a set of domains,
where $D_{i}$ is the domain of variable $x_{i}$, for $i=1,2,\cdots,n$; $C = \{C_{1},C_{2},\cdots,C_{t}\}$ is a set of constraints. For each $1\leq i \leq t$, constraint $C_{i}$ is a pair $\langle X_{i},R_{i} \rangle$,
where $X_{i} = \{x_{i_1},x_{i_2},\cdots,x_{i_{n_{i}}}\}\subseteq X$ is a set of variables and $R_{i}\subseteq D_{i_1}\times D_{i_2}\times \cdots \times D_{i_{n_{i}}}$ is a set of tuples of values.
A constraint $C_{i}$ is satisfied if the tuple of values assigned to the variables $x_{i1},x_{i2},\cdots,x_{in_{i}}$ is in the relation $R_{i}$. A solution is an assignment to all variables where all the constraints can be satisfied.
A CSP that has a solution is called satisfiable; otherwise it is unsatisfiable.

In Model RB \cite{xu2000}, the domain size $|D_{i}| = d$  for each $1\leq i\leq n$ ($d = n^{\alpha}$, where $\alpha > 0$ is a constant); the constraint length $n_{i} = k$ ($k\geq 2$ is a positive integer); $|R_{i}| = (1-p)d^{k}$ ($0 < p < 1$ is a constant); $t = r n\ln d$ ($r > 0$ is a constant).\footnote{We are very grateful to Prof. Donald E. Knuth for this suggestion on using $t = r n \ln d$ instead of $t = r n \ln n$ to denote the number of constraints, which makes the result look nicer.}

Given parameters $(n,\alpha,r,k,p)$, a random RB instance can be  generated  as follows \cite{xu2000}.

\emph{Step 1}. \emph{Select with repetition} $t=r n\ln d$ \emph{random constraints}. \emph{Each random constraint is formed by selecting without repetition} $k$ ($k \geq 2$) \emph{of} $n$ \emph{variables}.

\emph{Step 2}. \emph{For each constraint we uniformly select without repetition} $q=p\times d^{k}$ \emph{incompatible tuples of values}.

The following results \cite{xu2000} of Model RB establish the exact satisfiability transition points, where we use $X$ to denote the number of solutions in a given instance.

\begin{theorem}\label{th21}
Let $r_{cr} = \frac{1}{\ln\tau}$, where $\tau=\frac{1}{1-p}$.
If $k\alpha>1$ and $k\geq \tau$, then
\begin{align}
\lim_{n\rightarrow \infty}\mathrm{Pr}[X>0]=1\ \text{ if }\ r < r_{cr},\\
\lim_{n\rightarrow \infty}\mathrm{Pr}[X>0]=0\ \text{ if }\ r > r_{cr}.
\end{align}
\end{theorem}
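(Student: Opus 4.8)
The plan is to pair a first-moment bound for the unsatisfiable side with a second-moment argument for the satisfiable side. I start from the expectation. A fixed assignment is rejected by a single random constraint with probability exactly $p$, since its projection onto the $k$ chosen variables is one fixed $k$-tuple and the $q=pd^{k}$ forbidden tuples form a uniform $q$-subset of the $d^{k}$ tuples; as the $t$ constraints are drawn independently, the assignment survives all of them with probability $(1-p)^{t}$. Summing over the $d^{n}$ assignments,
\[
\mathbb{E}[X]=d^{n}(1-p)^{t}=\exp\!\big(n\ln d\,(1-r\ln\tau)\big),
\]
using $\ln(1-p)=-\ln\tau$ and $t=rn\ln d$. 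Thus $\mathbb{E}[X]\to0$ for $r>r_{cr}$ and $\mathbb{E}[X]\to\infty$ for $r<r_{cr}$.

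For $r>r_{cr}$ the second assertion follows at once from Markov's inequality, $\Pr[X>0]\le\mathbb{E}[X]\to0$. For $r<r_{cr}$ I would invoke the second moment method in the form $\Pr[X>0]\ge\mathbb{E}[X]^{2}/\mathbb{E}[X^{2}]$ and prove the ratio tends to $1$. To compute $\mathbb{E}[X^{2}]$ I classify ordered pairs of assignments $(\sigma,\sigma')$ by the number $\omega$ of coordinates on which they agree. A pair agreeing on $\omega$ coordinates satisfies one random constraint with probability
\[
g(\omega)=\frac{\binom{\omega}{k}}{\binom{n}{k}}(1-p)+\Big(1-\frac{\binom{\omega}{k}}{\binom{n}{k}}\Big)\frac{\binom{d^{k}-2}{q}}{\binom{d^{k}}{q}},
\]
the two cases recording whether the $k$ selected variables lie entirely in the agreement set (equal projections, survival probability $1-p$) or not (distinct projections, survival probability $\binom{d^{k}-2}{q}/\binom{d^{k}}{q}$). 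Since there are $\binom{n}{\omega}d^{n}(d-1)^{n-\omega}$ such pairs and the constraints are independent, $\mathbb{E}[X^{2}]=\sum_{\omega}\binom{n}{\omega}d^{n}(d-1)^{n-\omega}g(\omega)^{t}$, and dividing by $\mathbb{E}[X]^{2}=d^{2n}(1-p)^{2t}$ gives exactly the announced shape
\[
\frac{\mathbb{E}[X^{2}]}{\mathbb{E}[X]^{2}}=\sum_{\omega=0}^{n}\binom{n}{\omega}p_{n}^{\omega}(1-p_{n})^{n-\omega}\,f_{n}(\omega/n)^{t},\qquad p_{n}=\tfrac1d,\quad f_{n}(\omega/n)=\frac{g(\omega)}{(1-p)^{2}}.
\]

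The weights $\binom{n}{\omega}p_{n}^{\omega}(1-p_{n})^{n-\omega}$ are the point masses of a $\mathrm{Binomial}(n,1/d)$ law, so they sum to $1$ and concentrate on $\omega=O(n/d)=O(n^{1-\alpha})$, i.e. on vanishingly small relative overlap $x=\omega/n$. The whole argument then amounts to showing that $f_{n}(\omega/n)^{t}$ stays $1+o(1)$ where the binomial mass sits and cannot overcome the fast decay of the weights elsewhere. Near $\omega=0$ one computes $f_{n}(0)=\frac{(1-p)d^{k}-1}{(1-p)d^{k}-(1-p)}=1-\Theta(d^{-k})$, whence $f_{n}(0)^{t}=\exp\!\big(-\Theta(n^{1-k\alpha}\ln n)\big)\to1$ exactly because $k\alpha>1$; this is where that hypothesis is needed. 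Monotonicity of $f_{n}$ then lets one bound the summands across the whole bulk and conclude that it contributes $1+o(1)$.

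The delicate part, and the step I expect to be hardest, is the tail where $x=\omega/n$ is bounded away from $0$. Writing a generic summand as $\exp\!\big(\alpha n\ln n\,[\,-x+r\ln f_{n}(x)+o(1)]\big)$, I must show the bracket is strictly negative on all of $(0,1]$. With $f_{n}(x)\to1+(\tau-1)x^{k}$ this is the assertion $\psi(x):=-x+r\ln\big(1+(\tau-1)x^{k}\big)<0$ for $x\in(0,1]$: at the right end $\psi(1)=-1+r\ln\tau<0$ since $r<r_{cr}$, near $0$ the linear term $-x$ dominates the $x^{k}$ term because $k\ge2$, and the hypothesis $k\ge\tau$ is precisely what excludes an interior maximum of $\psi$ that would otherwise turn the bracket positive. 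Proving this uniformly — by partitioning $[0,n]$ into finitely many intervals and analysing the monotonicity or the asymptotics of the summand on each, so that the geometric-type decay of the binomial weights beats the growth of $f_{n}^{t}$ — is the technical core. Once it is in place every tail term is $e^{-\Omega(n\ln n)}$ while there are only $n+1$ terms, so the tail is $o(1)$; together with the bulk this yields $\mathbb{E}[X^{2}]/\mathbb{E}[X]^{2}=1+o(1)$ and hence $\Pr[X>0]\to1$.
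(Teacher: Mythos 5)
Your proposal is correct and follows essentially the same route as the paper (and as the Xu--Li proof \cite{xu2000}, which the paper cites for this theorem): a first-moment/Markov argument for $r>r_{cr}$, and for $r<r_{cr}$ the second moment with the overlap decomposition $\mathbb{E}[X^{2}]/\mathbb{E}[X]^{2}=\sum_{\omega}\binom{n}{\omega}d^{-\omega}(1-1/d)^{n-\omega}f_{n}(\omega/n)^{rm}$, split into a bulk controlled by binomial concentration and monotonicity of $f_{n}$ (where $k\alpha>1$ enters) and tails controlled by negativity of the rate function $\varphi(s)=r\ln\bigl(1+\tfrac{p}{1-p}s^{k}\bigr)-s$, which $k\geq\tau$ guarantees (the paper's Lemma 4.8 shows the sharp requirement is only $k\geq\tfrac{\tau\ln\tau}{\tau-1}$, which $k\geq\tau$ implies). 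The difference is one of refinement rather than method: the paper's Lemmas 4.4--4.7 sharpen the bulk/crossover analysis so that the weaker hypothesis $(2k-1)\alpha>1$ suffices, whereas your cutoffs exploit the stronger assumption $k\alpha>1$ available in this statement.
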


\begin{theorem}\label{th22}
 Let $p_{cr}=1-\frac{1}{\tau}$, where $\tau=e^{ \frac{1}{r} }$.
If $k\alpha>1$ and $k\geq \tau$, then
\begin{align}
\lim_{n\rightarrow \infty}\mathrm{Pr}[X>0]=1\ \text{ if }\ p < p_{cr},\\
\lim_{n\rightarrow \infty}\mathrm{Pr}[X>0]=0\ \text{ if }\ p > p_{cr}.
\end{align}
\end{theorem}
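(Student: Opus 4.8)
\noindent\emph{Proof proposal.} The plan is to recognize that Theorem~\ref{th22} is the ``dual'' of Theorem~\ref{th21}: both describe the same threshold surface, namely $1+r\ln(1-p)=0$, once one notes that the critical relation $p_{cr}=1-e^{-1/r}$ is equivalent to $r\ln(1-p_{cr})=-1$, i.e. to $r=1/\ln\!\big(\tfrac{1}{1-p_{cr}}\big)=r_{cr}$. Rather than redo the moment computations, I would deduce Theorem~\ref{th22} from Theorem~\ref{th21} by matching parameters. For the satisfiable side $p<p_{cr}$, set $\beta:=\tfrac{1}{1-p}$, which plays the role of $\tau$ in Theorem~\ref{th21}. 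From $p<p_{cr}=1-e^{-1/r}$ we get $1-p>e^{-1/r}$, hence $\beta<e^{1/r}=\tau\le k$, so the hypothesis $k\ge\tau$ already forces $k\ge\beta$; moreover $k\alpha>1$ holds and $\beta<e^{1/r}$ rewrites as $\ln\beta<1/r$, i.e. $r<1/\ln\beta$, which is $r<r_{cr}$ in the notation of Theorem~\ref{th21}. All its hypotheses then hold with $\tau=\beta$ and $r<r_{cr}$, and its first conclusion gives $\lim_n\Pr[X>0]=1$.

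For the unsatisfiable side $p>p_{cr}$ the first moment already suffices and no lower bound on $k$ is needed. Indeed $\mathbb{E}[X]=d^{\,n}(1-p)^{t}=\exp\!\big\{n\ln d\,(1+r\ln(1-p))\big\}$, and since $\ln(1-p_{cr})=-1/r$ the exponent $1+r\ln(1-p)$ is strictly negative for $p>p_{cr}$; hence $\mathbb{E}[X]\to0$ and Markov's inequality $\Pr[X>0]\le\mathbb{E}[X]$ gives $\lim_n\Pr[X>0]=0$. This is why the reduction stays clean even though $\beta>\tau$ on this side: the UNSAT half never used $k\ge\tau$ in the first place.

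I expect the genuine difficulty to live entirely inside the satisfiable half, i.e. inside Theorem~\ref{th21}, and this is precisely the part the paper later sharpens. The substance is the second moment (Paley--Zygmund) bound $\Pr[X>0]\ge\mathbb{E}[X]^{2}/\mathbb{E}[X^{2}]$, so one must show $\mathbb{E}[X^{2}]/\mathbb{E}[X]^{2}\to1$. Summing the joint-survival probability of a pair of assignments over their overlap $\omega$ yields
\[
\frac{\mathbb{E}[X^{2}]}{\mathbb{E}[X]^{2}}=\sum_{\omega=0}^{n}\binom{n}{\omega}\Big(\tfrac1d\Big)^{\omega}\Big(1-\tfrac1d\Big)^{n-\omega}f_{n}(\omega/n)^{t},
\]
where $f_n(s)$ is the per-constraint ratio built from $\binom{sn}{k}/\binom{n}{k}$ and $\tfrac{(N-q)(N-q-1)}{N(N-1)}$ with $N=d^{k}$, $q=pN$, and $f_n(s)\approx 1+(\beta-1)s^{k}$. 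First I would isolate the binomial bulk $\omega\approx n/d$, where $s\approx d^{-1}=n^{-\alpha}$ and $t\,s^{k}\approx r\alpha(\ln n)\,n^{1-\alpha k}\to0$ because $k\alpha>1$, so $f_n^{\,t}\to1$ and this part contributes $1+o(1)$.

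The hard part is the tail: controlling the competition between the decaying binomial weight and the growing factor $f_n^{\,t}$ for large overlap. This is governed by $h(s):=-s+r\ln\!\big(1+(\beta-1)s^{k}\big)$, with $h(0)=0$, $h'(0)=-1<0$ and $h(1)=-1+r\ln\beta<0$ on the SAT side; the role of $k\ge\tau\ge\beta$ is precisely to suppress the slowly-growing term $\ln(1+(\beta-1)s^{k})$ enough that $h(s)\le0$ on all of $[0,1]$, with no interior excursion above $0$. Making this uniform --- via a careful partition of $[0,n]$ into monotone and asymptotic regimes, the delicate handling of the near-$s=0$ region where the entropy $\tfrac1n\ln\binom{n}{\omega}$ competes with $-s\ln d$, and the verification that the corrections in $f_n$ relative to $1+(\beta-1)s^{k}$ are of lower order --- is the crux, and is exactly where the paper's refined estimate replaces the cruder bounds.
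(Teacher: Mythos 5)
Your proposal is correct, but it takes a genuinely different route from the paper's. The paper quotes Theorem~\ref{th22} from Xu--Li \cite{xu2000}, where the $r$-threshold and $p$-threshold statements are handled as two parallel instances of the same first/second-moment computation --- the same pattern this paper follows for its own pair, proving Theorem~\ref{th31} in full and declaring Theorem~\ref{th32} to follow ``similarly.'' You instead deduce Theorem~\ref{th22} from Theorem~\ref{th21} by parameter matching: on the satisfiable side, setting $\beta=\frac{1}{1-p}$, the hypothesis $p<p_{cr}=1-e^{-1/r}$ gives $\beta<e^{1/r}=\tau\le k$ and $r<1/\ln\beta$, so Theorem~\ref{th21} applies with its $\tau$ equal to $\beta$; on the unsatisfiable side you correctly avoid Theorem~\ref{th21} (whose hypothesis $k\ge\beta$ could fail there, since $\beta>\tau$) and argue directly from the first moment, $\mathbb{E}[X]=\exp\{n\ln d\,(1+r\ln(1-p))\}\to 0$ plus Markov, exactly as the paper argues for the unsatisfiable side of Theorem~\ref{th31}. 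Both halves check out, and the observation that the UNSAT half needs no lower bound on $k$ is a genuine point of care. What your route buys is economy and transparency: the duality $1+r\ln(1-p_{cr})=0$ is made explicit, no moment computation is repeated, and the same trick --- using the monotonicity of $\zeta(z)=\frac{z\ln z}{z-1}$ established in Section 3 --- would likewise derive Theorem~\ref{th32} from Theorem~\ref{th31} in two lines, turning the paper's ``can be obtained similarly'' into a rigorous one-step deduction. What the paper's route buys is self-containedness: someone must still carry out the second-moment analysis underlying Theorem~\ref{th21}, and your closing sketch of that analysis (the binomial bulk at $s\approx 1/d$ contributing $1+o(1)$ because $k\alpha>1$, and the tail controlled by keeping $r\ln\bigl(1+\frac{p}{1-p}s^{k}\bigr)-s$ negative via the lower bound on $k$) correctly locates where that work lives, consistent with the structure of the paper's Lemmas 4.1--4.10.
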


In \cite{c2}, Achlioptas et al. proved that standard random CSP models with fixed domains suffer from trivial asymptotic unsatisfiability.
An important feature of Model RB is that the domain size $d=n^{\alpha}$ ($\alpha >0$ is a constant) grows with the number of variables $n$, which can be used to model
some practical problems such as the $n$-queen problem, the Latin square problem and the maximum acyclic subgraph problem \cite{e1} .
From the theorems above, we can see that Model RB not only avoids trivial asymptotic behavior but also has phase transitions whose threshold
values can be located exactly. In \cite{c7}, Frieze and Molloy determined how fast the domain size has to grow in order to exhibit phase transition behavior.
Along this line of research, an interesting open problem is to determine how fast the domain size has to grow in order to exhibit phase transitions
while still having exact threshold values, which is the main motivation of this paper.

\section{Main Results}
In this paper, we fully exploit the power of the second moment method on Model RB and show that the requirements of $\alpha>\frac{1}{k}$ and $k\geq\tau$ in Theorems 2.1 and 2.2 can be relaxed as $\alpha>\frac{1}{2k-1}$ and $k\geq \frac{\tau\ln \tau}{\tau-1}$.

\begin{theorem}\label{th31}
Let $r_{cr} = \frac{1}{\ln\tau}$, where $\tau =\frac{1}{1-p}$.
If $(2k-1)\alpha>1$ and $k \geq \frac{\tau\ln \tau}{\tau -1}$, then
\begin{align}
\lim_{n\rightarrow \infty}\mathrm{Pr}[X>0]=1\ \text{ if }\ r < r_{cr},\label{a}\\
\lim_{n\rightarrow \infty}\mathrm{Pr}[X>0]=0\ \text{ if }\ r > r_{cr}.\label{3}
\end{align}
\end{theorem}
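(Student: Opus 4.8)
The plan is to prove the unsatisfiability half \eqref{3} by the first moment method and the satisfiability half \eqref{a} by the second moment method; the two hypotheses enter only in the latter. For \eqref{3}, note that a fixed assignment satisfies a single random constraint with probability $|R_i|/d^{k}=1-p$, and the $t$ constraints are chosen independently, so $\mathbb{E}[X]=d^{n}(1-p)^{t}$. With $\tau=1/(1-p)$ and $t=rn\ln d$ this gives
\[
\ln\mathbb{E}[X]=n\ln d\,(1-r\ln\tau).
\]
When $r>r_{cr}=1/\ln\tau$ the bracket is negative while $n\ln d\to\infty$, so $\mathbb{E}[X]\to0$, and Markov's inequality $\mathrm{Pr}[X>0]\le\mathbb{E}[X]$ yields \eqref{3}. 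No restriction on $\alpha$ or $k$ is needed here.

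For \eqref{a} I would use $\mathrm{Pr}[X>0]\ge\mathbb{E}[X]^{2}/\mathbb{E}[X^{2}]$ and show $\mathbb{E}[X^{2}]/\mathbb{E}[X]^{2}\to1$. Classifying ordered pairs of assignments by their overlap $\omega$ (the number of coordinates on which they agree), a single constraint is satisfied by both with probability
\[
P(\omega)=\frac{\binom{\omega}{k}}{\binom{n}{k}}(1-p)+\Big(1-\frac{\binom{\omega}{k}}{\binom{n}{k}}\Big)\frac{(d^{k}-q)(d^{k}-q-1)}{d^{k}(d^{k}-1)},
\]
and there are $\binom{n}{\omega}d^{\omega}[d(d-1)]^{n-\omega}$ such pairs. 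Dividing by $\mathbb{E}[X]^{2}=d^{2n}(1-p)^{2t}$ collapses the ratio to the announced form
\[
\frac{\mathbb{E}[X^{2}]}{\mathbb{E}[X]^{2}}=\sum_{\omega=0}^{n}\binom{n}{\omega}p_{n}^{\omega}(1-p_{n})^{n-\omega}f_{n}(\omega/n)^{t},\qquad p_{n}=\tfrac1d,\quad f_{n}(\omega/n)=\frac{P(\omega)}{(1-p)^{2}},
\]
so that the ratio equals $\mathbb{E}[f_{n}(W/n)^{t}]$ for $W\sim\mathrm{Bin}(n,1/d)$. A short computation gives $f_{n}(\omega/n)-1=(\tau-1)\,\big((\binom{\omega}{k}/\binom{n}{k})\,d^{k}-1\big)/(d^{k}-1)$, whence $f_{n}=1$ exactly at the mean overlap $\omega\approx n/d$ (where $\binom{\omega}{k}/\binom{n}{k}\sim(\omega/n)^{k}=d^{-k}$), $f_{n}<1$ below it, and $f_{n}$ increases to $f_{n}(1)=\tau$ at $\omega=n$. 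Since $\mathbb{E}[X^{2}]\ge\mathbb{E}[X]^{2}$ always, it suffices to prove the matching upper bound $\mathbb{E}[f_{n}(W/n)^{t}]\le1+o(1)$.

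The decisive step is a partition of $\{0,\dots,n\}$ into a central window and its complement. On the window $|\omega-n/d|\le\phi_{n}\sqrt{n/d}$ with $\phi_{n}\to\infty$ slowly, I would write $\omega=n/d+s\sqrt{n/d}$ and Taylor expand: using the formula for $f_{n}-1$ one finds that the leading fluctuation term of the exponent $t\ln f_{n}(\omega/n)$ has order $rk(\tau-1)(\ln d)\,s\,n^{(1-(2k-1)\alpha)/2}$, which tends to $0$ precisely because $(2k-1)\alpha>1$, while the quadratic and higher terms carry even more negative powers of $n$. Hence $f_{n}(\omega/n)^{t}\to1$ uniformly on the window, and by the central limit theorem the window carries probability $1-o(1)$, so it contributes $1+o(1)$. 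This is the sole place where the relaxed domain condition $(2k-1)\alpha>1$ and the central limit theorem are used.

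For the complement I would control the exponential rate $g(z)=-z+r\ln f_{n}(z)$, the $O(n\ln d)$ part of $\tfrac1n\log$ of the summand after absorbing the subdominant entropy and $\log(1-1/d)$ contributions. Away from the mean overlap the $d^{-k}$ corrections are negligible and $f_{n}(z)\approx1+(\tau-1)z^{k}$, so $g'(z)=-1+rk(\tau-1)z^{k-1}/(1+(\tau-1)z^{k})$ is unimodal in $z$. Since $g(z)$ is increasing in $r$ wherever $f_{n}(z)>1$, it suffices to check $g\le0$ at $r=r_{cr}$, where $g(1)=0$; the hypothesis $k\ge\frac{\tau\ln\tau}{\tau-1}$ is exactly the inequality $g'(1)\ge0$ at $r=r_{cr}$, which forces $g$ to decrease and then increase on $[0,1]$ with maximum value $0$ attained only at the endpoints. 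Consequently every summand outside the window is $\exp(-cn\ln d)$ for some $c>0$, and the $O(n)$ such terms sum to $o(1)$. The hard part will be the bookkeeping in the crossover zone between the central window and the tail: matching the central-limit estimate (valid within $O(\sqrt{n/d})$ of the mean) to the large-deviation estimate governed by $g$ (valid for overlaps bounded away from the mean), uniformly controlling the Stirling/Taylor error terms so that the intermediate overlaps contribute nothing, and verifying the claimed monotone shape of $g$ rather than merely its boundary behaviour at $z=1$. This is precisely the ``delicate partition'' on which the sharpness of both conditions rests.
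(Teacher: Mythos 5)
Your overall architecture is the same as the paper's: the first moment plus Markov for the unsatisfiable half; for the satisfiable half, the reduction of $\mathbb{E}[X^{2}]/\mathbb{E}[X]^{2}$ to $\sum_{\omega}\binom{n}{\omega}(1/d)^{\omega}(1-1/d)^{n-\omega}f_{n}(\omega/n)^{t}$, a central region around the mean overlap $n/d$, and a tail controlled by the rate function $\varphi(s)=r\ln\bigl(1+\tfrac{p}{1-p}s^{k}\bigr)-s$, whose nonpositivity at $r=r_{cr}$ is equivalent to $u(s)=\tau^{s}-(\tau-1)s^{k}-1\ge 0$ on $(0,1)$, i.e.\ to $k\ge \tau\ln\tau/(\tau-1)$ (the paper's Lemma 4.8; your condition $g'(1)\ge 0$ at $r=r_{cr}$). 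Those parts are correct, as is your Taylor expansion showing $f_{n}^{t}\to 1$ uniformly on the window $|\omega-n/d|\le\phi_{n}\sqrt{n/d}$ when $(2k-1)\alpha>1$.

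The genuine gap is the crossover region, and it is not ``bookkeeping'' but the technical core of the theorem. Your claim that every summand outside the window is $\exp(-cn\ln d)$ is false throughout $n/d+\phi_{n}\sqrt{n/d}\le\omega\le\epsilon n$: there $\ln\binom{n}{\omega}$, $\omega\ln d$ and $t\ln f_{n}(\omega/n)$ are of comparable size (none is subdominant), and the rate function $g(z)=-z+r\ln f_{n}(z)$, which tends to $0$ as $z\downarrow 1/d$, cannot decide which wins; the sign of the true exponent is governed exactly by the terms you absorbed as lower order. Concretely, at the paper's cutoff $\omega=n\eta_{1}=n/d+\lambda d^{k-1}/\ln d$ the summand is of order $\exp\bigl\{-(\lambda^{2}/2+o(1))\,n^{(2k-1)\alpha-1}/(\ln d)^{2}\bigr\}$ (Lemma 4.6) --- nothing like $\exp(-cn\ln d)$, barely $o(1)$ when $(2k-1)\alpha>1$, and not vanishing at all when $(2k-1)\alpha\le 1$ (this is exactly the content of Claim 5.2). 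So the hypothesis $(2k-1)\alpha>1$ must be consumed a second time, and more sharply, in this zone; your window expansion is not ``the sole place'' it is used. The paper's entire apparatus --- the choice of $\eta_{1}=1/d+\lambda/(n^{1-(k-1)\alpha}\ln d)$, the digamma-based proof that the Gamma-function interpolation $\Phi_{c}$ decreases on $(n\eta_{1},n\eta_{2})$ (Lemma 4.4), and the pointwise estimate at $n\eta_{1}$ (Lemmas 4.5--4.6) --- exists precisely to cover this region, and you offer no substitute (a pointwise Chernoff/KL-divergence bound could also work, but it requires the same quantitative computation). Two smaller points: the strip $[\,n/d+\phi_{n}\sqrt{n/d},\,n\eta_{1}]$ \emph{can} be dispatched cheaply, since $W$ is increasing, $W(n\eta_{1})=O(1)$ (Lemma 4.2), and the strip carries probability $o(1)$; and your CLT statement requires $n/d\to\infty$, i.e.\ $\alpha<1$, so the case $k\alpha>1$ should be deferred, as the paper does, to the original Xu--Li argument.
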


\begin{theorem}\label{th32}
Let $p_{cr}=1-\frac{1}{\tau}$, where $\tau =e^{ \frac{1}{r} }$.
If $(2k-1)\alpha>1$ and $k \geq \frac{\tau\ln \tau}{\tau -1}$, then
\begin{align}
\lim_{n\rightarrow \infty}\mathrm{Pr}[X>0]=1\ \text{ if }\ p < p_{cr},\label{b}\\
\lim_{n\rightarrow \infty}\mathrm{Pr}[X>0]=0\ \text{ if }\ p > p_{cr}.\label{4}
\end{align}
\end{theorem}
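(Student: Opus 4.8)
The two theorems are the $p$-parametrised and $r$-parametrised forms of a single threshold, both governed by the sign of $1-r\ln\frac{1}{1-p}$, so I would prove them by one analysis; I describe it for Theorem~\ref{th32}. The upper halves \eqref{3} and \eqref{4} are the first-moment direction. Since each fixed assignment satisfies a single constraint with probability $1-p$ and the $t=rn\ln d$ constraints are drawn independently, $\mathbb E[X]=d^{n}(1-p)^{t}=\exp\!\big(n\ln d\,(1-r\ln\tau)\big)$ with $\tau=\frac{1}{1-p}$. For $p>p_{cr}$ (equivalently $r\ln\tau>1$) the exponent tends to $-\infty$, so $\mathbb E[X]\to0$ and Markov's inequality gives $\Pr[X>0]\le\mathbb E[X]\to0$. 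This half uses neither $(2k-1)\alpha>1$ nor the lower bound on $k$.

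The substance is the lower half \eqref{b}, where $p<p_{cr}$ forces $\mathbb E[X]\to\infty$ and I would apply $\Pr[X>0]\ge\mathbb E[X]^{2}/\mathbb E[X^{2}]$, aiming to show the ratio tends to $1$. Writing $p_n=1/d$ and grouping ordered pairs of assignments by their overlap $\omega$, the computation gives
\[
\frac{\mathbb E[X^{2}]}{\mathbb E[X]^{2}}=\sum_{\omega=0}^{n}\binom{n}{\omega}p_n^{\omega}(1-p_n)^{n-\omega}f_n\!\left(\tfrac{\omega}{n}\right)^{t},
\]
the form advertised in the introduction, where $f_n(\omega/n)$ is the joint acceptance probability of one constraint by two assignments of overlap $\omega$, divided by $(1-p)^{2}$. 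The decisive point is to keep $f_n$ exact rather than replacing it by $1+(\omega/n)^{k}(\tau-1)$: the probability that two \emph{distinct} value-tuples are simultaneously permitted is $(1-p)^{2}$ times $1-d^{-k}(\tau-1)$, so
\[
f_n\!\left(\tfrac{\omega}{n}\right)=1+(\tau-1)\Big[\tbinom{\omega}{k}\big/\tbinom{n}{k}-\big(1-\tbinom{\omega}{k}\big/\tbinom{n}{k}\big)d^{-k}\Big].
\]
At the mean $\omega=np_n=n/d$ of the controlling $\mathrm{Bin}(n,p_n)$ weight one has $\binom{\omega}{k}/\binom{n}{k}\approx d^{-k}$, so the bracket cancels to leading order and $t\ln f_n(p_n)\to0$; the naive surrogate would instead give $t\ln f_n(p_n)\to\infty$ whenever $\alpha k<1$, which is exactly the obstruction that forced the old hypothesis $\alpha>1/k$.

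My plan is then to partition $\{0,\dots,n\}$ into a central window and its complement. On the window $|\omega-np_n|\le(\ln n)\sqrt{np_n}$ I would use a Gaussian/local-limit approximation for the $\mathrm{Bin}(n,p_n)$ weight and expand $\ln f_n$ to first order; the variation of $t\ln f_n$ across the window is, up to logarithmic factors, of order $n^{(1+\alpha)/2-\alpha k}$, whose exponent is negative \emph{precisely} under $(2k-1)\alpha>1$, so $f_n(\omega/n)^{t}=1+o(1)$ uniformly there and the window contributes $(1+o(1))\sum_{\text{window}}\binom{n}{\omega}p_n^{\omega}(1-p_n)^{n-\omega}=1+o(1)$. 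Outside the window I would pass to the exponential rate $\varphi(x)=-x+r\ln\!\big(1+(\tau-1)x^{k}\big)$, against which each summand is $\exp\!\big((1+o(1))\,n\ln d\,\varphi(x)\big)$; proving $\varphi(x)<0$ for every $x$ bounded away from $0$ reduces, at the critical rate $r=1/\ln\tau$, to $1+(\tau-1)x^{k}\le\tau^{x}$ on $[0,1]$, a function vanishing at both endpoints whose sign is controlled by its derivative at $x=1$, namely $\tau\ln\tau-(\tau-1)k\le0$, i.e.\ $k\ge\tau\ln\tau/(\tau-1)$. Combining the window and its complement yields $\mathbb E[X^{2}]/\mathbb E[X]^{2}=1+o(1)$ and hence $\Pr[X>0]\to1$; Theorem~\ref{th31} follows by the identical argument with $\tau=\frac{1}{1-p}$ fixed and $r$ the varying parameter.

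I expect the central-window estimate to be the main obstacle. One must control $f_n^{t}$ to relative error $o(1)$ while the exponent $t$ grows like $n\ln n$, which forces the leading cancellation $f_n(p_n)\approx1$ and the first-order variation bound to be executed with matching precision and then fused cleanly with the limit law for the binomial weight; this is where both relaxed hypotheses are consumed, and it is the boundary case $(2k-1)\alpha=1$ of this window analysis that should underlie the sharpness assertion (Claim~5.2).
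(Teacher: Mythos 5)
Your skeleton correctly identifies the two mechanisms that drive the paper's proof: keeping the exact joint-acceptance factor $f_n$ with the $d^{-k}$ subtraction, so that $t\ln f_n(1/d)\to 0$ (this is what removes the old hypothesis $k\alpha>1$), and the rate-function inequality $1+(\tau-1)x^{k}\le\tau^{x}$ on $[0,1]$, reduced to the endpoint-derivative condition $\tau\ln\tau-(\tau-1)k\le 0$ — this is exactly the paper's Lemma 4.8, except that the paper also supplies the shape argument (the line versus the concave curve $\ln s$ has at most two intersections) needed to conclude that $u>0$ on $(0,1)$ follows from $u(0)=u(1)=0$ and $u'(1)\le 0$; your "sign controlled by the derivative at $x=1$" needs that step. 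Your window computation is also sound: the variation of $t\ln f_n$ over $|\omega-n/d|\le(\ln n)\sqrt{n/d}$ is of polynomial order $n^{(1+\alpha)/2-k\alpha}$, negative precisely when $(2k-1)\alpha>1$. (The paper is less ambitious here: it never proves the ratio tends to $1$, but bounds the sum over $[0,n\eta_1]$, $\eta_1=\frac1d+\frac{\lambda}{n^{1-(k-1)\alpha}\ln d}$, by the endpoint value $\text{W}(n\eta_1)=\exp\{k\lambda pr/(1-p)\}+o(1)$ and lets $\lambda\to0$ only after $n\to\infty$.)

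The genuine gap is the intermediate regime: overlaps $\omega$ above your window but with $\omega/n\to 0$, roughly $n\eta_1\le\omega\le\eta_2 n$. Neither of your two tools covers it. Your rate-function step is explicitly restricted to $x$ bounded away from $0$, and the approximation it rests on — summand $=\exp\{(1+o(1))\,m\,\varphi(x)\}$ — is false there: for $\omega=n^{1-\beta}$ with $0<\beta<\alpha$ the binomial weight is $\exp\{-(1-\beta/\alpha+o(1))\,m\,x\}$, not $\exp\{-(1+o(1))\,m\,x\}$, and the cost factor $1-\beta/\alpha$ degenerates to $0$ exactly as $\omega$ descends toward the window, while at the same time $\text{W}(\omega)=f_n(\omega/n)^{t}=\exp\{\Theta(n^{1-\beta k}\ln n)\}$ is superpolynomially large throughout this range (since $\beta<\alpha\le 1/k$ in the relevant case $k\alpha\le 1$). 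So one must show, uniformly over $\Theta(n)$ values of $\omega$, that the degenerating binomial cost still beats the growing gain; this is precisely where the paper works hardest: it interpolates $\Phi$ by the Gamma-function expression $\Phi_c$ and proves $\Phi_c'<0$ on $(n\eta_1,n\eta_2)$ via digamma estimates, splitting at $\eta_m=n^{-(1-\alpha)/(2(k-1))}$ and requiring the constants to satisfy $\alpha_0/\alpha-\mu\eta_2^{k-1}>0$ (Lemma 4.4), then combines monotonicity with the superpolynomially small endpoint value $\Phi(n\eta_1)$ (Lemma 4.6) to get Lemma 4.7. Your proposal anticipates the main obstacle inside the window, but the window estimate is actually fine; the missing idea is this monotonicity/endpoint argument just outside it. Two smaller points: your Gaussian window presupposes $np_n=n^{1-\alpha}\to\infty$, so, like the paper, you should first dispose of the case $k\alpha>1$ (already settled by Xu and Li) to ensure $\alpha\le 1/k<1$; and the sum over $[$window edge$,\,n\eta_1]$ can be handled cheaply (mass $o(1)$ times $\text{W}\le \text{W}(n\eta_1)=O(1)$), so the gap truly begins at $n\eta_1$.
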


Since $\tau > 1$, then $0<\frac{\tau \ln \tau}{\tau-1}<\tau$.
$\frac{\tau \ln \tau}{\tau-1}\sim \ln\tau=o(\tau)$ as $\tau\rightarrow \infty$.

Let $\zeta(z) = \frac{z \ln z}{z - 1}, z > 1$. Since $\zeta^{'}(z) = \frac{(z - 1)-\ln z}{(z - 1)^{2}}>0, z > 1$, then
$\zeta(z)$ increases monotonously in $(1, +\infty)$.
It is easy to see that $\lim_{z \rightarrow 1^{+}} \zeta(z) = 1$ and $\lim_{z \rightarrow +\infty} \zeta(z) = +\infty$.
Thus, for any fixed $k\geq 2$, the equation $\zeta(z) = k, z > 1$ has an unique solution $\tau_{k} > k$.
$\tau_{k} \sim e^{k}$ as $k \rightarrow \infty$.
Then, $\tau\leq\tau_{k}$, i.e., $p\leq 1-\frac{1}{\tau_{k}}$ in Theorem 3.1, and $r \geq \frac{1}{ \ln \tau_{k} }$ in Theorem 3.2,
compared with $\tau \leq k$ in Theorems 2.1 and 2.2, i.e., $p\leq 1-\frac{1}{k}$ in Theorem 2.1, and $r \geq \frac{1}{ \ln k }$ in Theorem 2.2.

Thus, we have the table
\begin{center}
\begin{tabular}{lllll}
$k$   &$p$ in Th. 3.1                &$p$ in Th. 2.1                &$r$ in Th. 3.2                &$r$ in Th. 2.2\\
\hline
2     &$\leq 0.79681\cdots$          &$\leq 0.50000\cdots$          &$\geq 0.62750\cdots$          &$\geq 1.44269\cdots$\\
3     &$\leq 0.94047\cdots$          &$\leq 0.66666\cdots$          &$\geq 0.35442\cdots$          &$\geq 0.91023\cdots$\\
4     &$\leq 0.98017\cdots$          &$\leq 0.75000\cdots$          &$\geq 0.25505\cdots$          &$\geq 0.72134\cdots$\\
5     &$\leq 0.99302\cdots$          &$\leq 0.80000\cdots$          &$\geq 0.20140\cdots$          &$\geq 0.62133\cdots$\\
\end{tabular}
\end{center}

Our results are obtained by  fully exploiting the power of the second moment method in
Model RB in that, $(2k-1)\alpha >1$ and $k \geq\frac{\tau\ln \tau}{\tau -1}$ in Theorems 3.1 and 3.2 are necessary conditions for the second moment method
to give nontrivial results (See Claims 5.2 and
5.3).

\section{Some Lemmas}
Xu and Li  \cite{xu2000} have proved that if $k\alpha >1$ and $k\geq \tau$, the conclusions of Theorems 3.1 and 3.2 are true. By using the same analysis with \cite{xu2000} and Lemma 4.8, it is not hard to show
that if $k\alpha >1$ and $k\geq \frac{\tau\ln\tau}{\tau-1}$, then Theorems 3.1 and 3.2 hold.
Thus, we only need to consider the case of $k\alpha \leq 1$ in the following proof.

In the following, we tacitly assume that $(2k-1)\alpha>1$, $k \geq \frac{\tau\ln \tau}{\tau -1}$, and $r < r_{cr}$  unless otherwise specified.

As before, let $X$ be the number of solutions of a random  RB instance and let $m = n \ln d$. From \cite{xu2000}, we have
\begin{align}
\mathbb{E}[X] = d^{n}(1-p)^{ r m },\label{5}
\end{align}
\begin{align}
\mathbb{E}[X^{2}] = \sum_{S=0}^{n} d^{n} {n \choose S} (d-1)^{n-S} \left(\frac{{d^{k}-1 \choose q}}{{d^{k} \choose q}}\frac{{S \choose k}}{{n \choose k}}+\frac{{d^{k}-2 \choose q}}{{d^{k} \choose q}}\left(1-\frac{{S \choose k}}{{n \choose k}}\right)\right)^{ r m }.\label{30}
\end{align}

Simple calculation yields
\begin{align}
\frac{{d^{k}-1 \choose q}}{{d^{k} \choose q}}=1-p,\ \ \text{and}\ \frac{{d^{k}-2 \choose q}}{{d^{k} \choose q}}=(1-p)^{2} - p(1-p)\frac{ d^{-k} }{ 1 - d^{-k} }.\label{7}
\end{align}

Let $g(s) = -k(k-1)(1-s)s^{k-1}/2$, where $s = S/n$. Then
\begin{align}
\nonumber
\frac{{S \choose k}}{{n \choose k}} = & \frac{ s \left( s - \frac{1}{n} \right) \left( s - \frac{2}{n} \right) \cdots \left( s - \frac{k-1}{n} \right) }{ 1 \left( 1 - \frac{1}{n} \right) \left( 1 - \frac{2}{n} \right) \cdots \left( 1 - \frac{k-1}{n} \right) } \\
\nonumber
= & \frac{ s^{k} - \frac{k(k-1)}{2n} s^{k-1} + O\left( \frac{1}{n^{2}} \right) }{ 1 - \frac{k(k-1)}{2n} + O\left( \frac{1}{n^{2}} \right) } \\
= & s^{k}+\frac{g(s)}{n}+o\left( \frac{1}{m} \right),\label{8}
\end{align}
where $o\left( \frac{1}{m} \right)$ is independent of $s$ (i.e., independent of $S$).

Let $f(s)=1+\frac{p}{1-p}\frac{ s^{k} - d^{-k} }{ 1 - d^{-k} }$, $s\in [0, 1]$.
For $S = 1, 2, \cdots, n$, let $\Phi (S)=\text{B}(S)\text{W}(S)$,
where $\text{B}(S)={n \choose S}  \left(\frac{1}{d}\right)^{S}  \left(1-\frac{1}{d}\right)^{n-S}$, and $\text{W}(S)=f(S/n)^{ r m }$.

Let $\eta_{1}=\frac{1}{d}+\frac{ \lambda }{n^{1-(k-1)\alpha}\ln d}$ ($\lambda > 0$ is an arbitrarily given constant),
and let $\eta_{2}, \eta_{3}$, and $\mu$ be constants such that
$0<\eta_{2}<\eta_{3}<1$,
$\alpha_{0}/\alpha - \mu \eta_{2}^{k-1} > 0$ with $\alpha_{0} = \frac{(2k-1)\alpha-1}{2(k-1)}$, $\mu > \frac{ kpr }{1-p}$,
and $r\ln (1-p) + \eta_{3} > 0$.

In order to prove ($\ref{a}$), we need some lemmas as follows.

\begin{lemma}\label{lemma41}
Fix any $\alpha > 0$, then
\begin{align}
\nonumber
\frac{\mathbb{E}[X^{2}]}{\mathbb{E}[X]^{2}} = &\Big(1 + o(1) \Big) \sum_{S=0}^{n} \text{B}(S)\left( f(s) + \frac{ p g(s) }{ (1-p) \left(1-d^{-k}\right) n} \right)^{ r m }\\
\nonumber
\leq                                          &\Big(1 + o(1) \Big)\sum_{S=0}^{n}\Phi(S).
\end{align}
\end{lemma}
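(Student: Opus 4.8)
The plan is to divide the exact expression \eqref{30} for $\mathbb{E}[X^2]$ by $\mathbb{E}[X]^2 = d^{2n}(1-p)^{2rm}$ term-by-term, and to show that each summand converges, uniformly in $S$, to $\text{B}(S)$ times the claimed corrected base raised to the power $rm$. First I would split each summand into a combinatorial prefactor and an exponential factor. The prefactor is $d^{n}\binom{n}{S}(d-1)^{n-S}/d^{2n}$; writing $(d-1)^{n-S}=d^{n-S}(1-1/d)^{n-S}$ collapses this exactly to $\binom{n}{S}(1/d)^{S}(1-1/d)^{n-S}=\text{B}(S)$, with no error term. The exponential factor is the bracketed base of \eqref{30} divided by $(1-p)^{2}$, all raised to the $rm$.

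The core computation is to simplify that base. Writing $P=\binom{S}{k}/\binom{n}{k}$ and inserting the two identities in \eqref{7}, the base before division is $(1-p)P+\big[(1-p)^2-p(1-p)\tfrac{d^{-k}}{1-d^{-k}}\big](1-P)$. Dividing by $(1-p)^2$ and collecting terms, I would reduce it to the clean form $1+\tfrac{p}{1-p}\tfrac{P-d^{-k}}{1-d^{-k}}$, using $1+\tfrac{d^{-k}}{1-d^{-k}}=\tfrac{1}{1-d^{-k}}$ to absorb the coefficient of $P$. Substituting the expansion \eqref{8}, namely $P=s^{k}+g(s)/n+o(1/m)$ with the $o(1/m)$ independent of $s$, and recalling $f(s)=1+\tfrac{p}{1-p}\tfrac{s^{k}-d^{-k}}{1-d^{-k}}$, this base becomes exactly $f(s)+\tfrac{pg(s)}{(1-p)(1-d^{-k})n}+o(1/m)$, where the residual $o(1/m)$ term carries the bounded constant $\tfrac{p}{(1-p)(1-d^{-k})}$ and is therefore still $o(1/m)$ uniformly in $S$.

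To obtain the first (equality) line of the lemma, I would factor the residual $o(1/m)$ out of the $rm$-th power. Setting $A(s)=f(s)+\tfrac{pg(s)}{(1-p)(1-d^{-k})n}$, note that $f(s)\ge f(0)=1-\tfrac{p}{1-p}\tfrac{d^{-k}}{1-d^{-k}}\to 1$, so $f(s)$, and hence $A(s)=f(s)+O(1/n)$, is bounded below by a positive constant uniformly in $s\in[0,1]$ for all large $n$. Then $\big(A(s)+o(1/m)\big)^{rm}=A(s)^{rm}\big(1+o(1/m)/A(s)\big)^{rm}$, and since $rm=rn\ln d$, the second factor equals $\exp\!\big(rm\cdot o(1/m)\big)=1+o(1)$ uniformly in $S$; pulling this common $(1+o(1))$ out of the sum gives the equality. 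For the inequality, I would use that $g(s)=-k(k-1)(1-s)s^{k-1}/2\le 0$ on $[0,1]$, so the correction term is nonpositive and $A(s)\le f(s)$; as both are positive, raising to the positive power $rm$ preserves the inequality, giving $A(s)^{rm}\le f(s)^{rm}$, whence $\text{B}(S)A(s)^{rm}\le \text{B}(S)f(s)^{rm}=\Phi(S)$ termwise and the bound $\le(1+o(1))\sum_{S=0}^{n}\Phi(S)$ follows.

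The main obstacle is establishing the \emph{uniformity} of the $o(1/m)$ estimate across all $S$ from $0$ to $n$ and, in tandem, a uniform positive lower bound on $A(s)$; without these the passage $\big(1+o(1/m)\big)^{rm}=1+o(1)$ cannot be taken outside the summation as a single $(1+o(1))$ factor. The favorable point is that the $o(1/m)$ in \eqref{8} is by hypothesis independent of $s$ and that $f(s)$ is minimized at $s=0$ with value tending to $1$, so both requirements hold for all large $n$; the remaining work is the routine algebra verifying the base simplification and confirming that the bounded multiplicative constants do not disturb the order of the error.
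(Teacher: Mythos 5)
Your proposal is correct and takes essentially the same approach as the paper: substitute the identities (\ref{7}) and the expansion (\ref{8}) into (\ref{30}), pull the uniform $o(1/m)$ error out of the $rm$-th power as a single $1+o(1)$ factor, and use the nonpositivity of $g(s)$ to bound the result by $\big(1+o(1)\big)\sum_{S=0}^{n}\Phi(S)$. The paper's proof states this in two lines; your write-up merely supplies the base-simplification algebra and the uniformity justifications that the paper leaves implicit.
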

\begin{proof}
Substituting ($\ref{7}$), ($\ref{8}$) into ($\ref{30}$), and note that $o\left( \frac{1}{m} \right)$ in (\ref{8}) is independent of $s$, then
\begin{align}
\nonumber
     &\frac{\mathbb{E}[X^{2}]}{\mathbb{E}[X]^{2}}=\sum_{S=0}^{n} \text{B}(S) \left(f(s) +  \frac{ p g(s) }{ (1-p) \left(1-d^{-k}\right) n} + o\left( \frac{1}{m} \right)\right)^{ r m }\\
\nonumber
=    &\Big(1 + o(1) \Big) \sum_{S=0}^{n} \text{B}(S)\left( f(s) + \frac{ p g(s) }{ (1-p) \left(1-d^{-k}\right) n} \right)^{ r m } \leq \Big(1 + o(1) \Big)\sum_{S=0}^{n}\Phi(S).
\end{align}
Thus, the proof of Lemma 4.1 is finished.
\end{proof}

\begin{lemma}\label{lemma42}
Fix any $\alpha > 0$, if $k\alpha\leq 1$, then $\text{W}(n\eta_{1}) = \exp\left\{\frac{ k\lambda p r }{1-p}\right\} + o(1)$.
\end{lemma}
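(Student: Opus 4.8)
The plan is to evaluate $\text{W}(n\eta_{1}) = f(\eta_{1})^{rm}$ directly, since the argument is $s = (n\eta_{1})/n = \eta_{1}$, and to show that $f(\eta_{1})$ differs from $1$ by a quantity of order $1/m$ whose coefficient, after exponentiation by $rm$, produces exactly $\frac{k\lambda pr}{1-p}$. First I would rewrite $\eta_{1} = \frac{1}{d}\bigl(1 + \delta\bigr)$ with $\delta = \frac{\lambda n^{k\alpha - 1}}{\alpha\ln n}$, which follows from $\frac{1}{d} = n^{-\alpha}$, $\ln d = \alpha\ln n$, and the definition of $\eta_{1}$. This is where the hypothesis $k\alpha\leq 1$ is used: it forces $n^{k\alpha - 1}\leq 1$, so $\delta\to 0$, with the borderline case $k\alpha = 1$ still giving $\delta = \frac{\lambda}{\alpha\ln n}\to 0$ logarithmically.

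Next I would expand by the Taylor/binomial formula $\eta_{1}^{k} = d^{-k}(1+\delta)^{k} = d^{-k}\bigl(1 + k\delta + O(\delta^{2})\bigr)$, so that $\eta_{1}^{k} - d^{-k} = k\,d^{-k}\delta\,(1 + o(1))$. A one-line computation gives $d^{-k}\delta = n^{-k\alpha}\cdot\frac{\lambda n^{k\alpha-1}}{\alpha\ln n} = \frac{\lambda}{\alpha n\ln n} = \frac{\lambda}{m}$, recalling that $m = n\ln d = \alpha n\ln n$. Since $1 - d^{-k}\to 1$, this yields $\frac{\eta_{1}^{k} - d^{-k}}{1 - d^{-k}} = \frac{k\lambda}{m}(1 + o(1))$, and therefore, from the definition of $f$, $f(\eta_{1}) = 1 + \frac{p}{1-p}\cdot\frac{k\lambda}{m}(1 + o(1))$.

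Finally I would take the $rm$-th power and pass to the limit. Writing $c = \frac{k\lambda p}{1-p}$, we have $f(\eta_{1})^{rm} = \exp\Bigl\{rm\ln\bigl(1 + \tfrac{c(1+o(1))}{m}\bigr)\Bigr\}$, and by $\ln(1+x) = x + O(x^{2})$ the exponent equals $rc(1 + o(1))\to rc$. Hence $\text{W}(n\eta_{1})\to\exp\{\frac{k\lambda pr}{1-p}\}$, which is exactly the assertion.

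The only point requiring care is the bookkeeping of error terms after multiplication by $rm$: I must verify that the $O(\delta^{2})$ from the expansion and the internal $o(1)$ in $f(\eta_{1})$ remain negligible when amplified by the factor $rm$, i.e. that $rm\cdot o(1/m) = o(1)$. This is precisely where $k\alpha\leq 1$ does the real work, since it is exactly the condition that keeps $\delta = o(1)$; were $k\alpha > 1$, the correction $\delta$ would grow and the clean exponential limit would break down. Every other estimate is routine.
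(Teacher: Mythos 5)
Your proof is correct and follows essentially the same route as the paper: expand $\eta_{1}^{k}-d^{-k}$ to get the leading term $\frac{k\lambda}{m}$, plug into $f$, and exponentiate by $rm$; your factorization $\eta_{1}=\frac{1}{d}(1+\delta)$ is just a cosmetic repackaging of the paper's direct binomial expansion. In fact your write-up is more careful than the paper's one-line proof, since you make explicit that $k\alpha\leq 1$ is exactly what forces $\delta\to 0$ and keeps the quadratic correction at $o(1/m)$.
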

\begin{proof}Note that $\eta_{1}^{k} - d^{-k} = \frac{ k\lambda }{m} + o\left( \frac{1}{m} \right)$, then
$f(\eta_{1}) = 1 + \frac{ k\lambda p }{ (1 - p)m } + o\left( \frac{1}{m} \right)$.
Thus, $\text{W}(n\eta_{1})=\left(1+\frac{ k\lambda p }{ (1-p)m } + o\left( \frac{1}{m} \right)\right)^{ r m } = \exp\left\{\frac{ k\lambda p r }{1-p}\right\} + o(1)$,
and we finish the proof of Lemma 4.2.
\end{proof}

\begin{lemma}\label{lemma43}
 $\sum_{S=0}^{n\eta_{1}}\Phi (S) \leq \exp\left\{\frac{ k\lambda p r }{1-p}\right\} + o(1)$.
\end{lemma}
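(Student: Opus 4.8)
The plan is to combine the monotonicity of $\text{W}$ with the fact that $\text{B}$ is an honest probability mass function, and then read off the value $\text{W}(n\eta_1)$ from Lemma 4.2. The crucial structural observation is that $f$ is nondecreasing on $[0,1]$: its derivative is
\[
f'(s) = \frac{p}{1-p}\cdot\frac{k\,s^{k-1}}{1-d^{-k}} \geq 0 ,
\]
and for $n$ large $f$ stays positive on the truncated range, since $f(0) = 1 - \frac{p}{1-p}\frac{d^{-k}}{1-d^{-k}} \to 1$ while $f(\eta_1) = 1 + o(1)$ by the computation in Lemma 4.2. Because $f$ is positive and increasing, raising to the power $rm$ preserves the order, so $\text{W}(S) = f(S/n)^{rm}$ is nondecreasing in $S$. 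Hence for every integer $S$ with $S \leq n\eta_1$ we have $s = S/n \leq \eta_1$ and therefore $\text{W}(S) \leq f(\eta_1)^{rm} = \text{W}(n\eta_1)$.

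With this in hand I would factor the uniform bound out of the sum and then use that $\text{B}$ sums to one. Explicitly,
\[
\sum_{S=0}^{n\eta_1}\Phi(S) = \sum_{S=0}^{n\eta_1}\text{B}(S)\text{W}(S) \leq \text{W}(n\eta_1)\sum_{S=0}^{n\eta_1}\text{B}(S) \leq \text{W}(n\eta_1),
\]
where the last inequality holds because $\text{B}(S) = {n \choose S}(1/d)^{S}(1-1/d)^{n-S}$ is exactly the mass function of a $\mathrm{Binomial}(n,1/d)$ law, so its partial sums are bounded by $\sum_{S=0}^{n}\text{B}(S) = 1$. Finally, Lemma 4.2 (applicable since we are in the regime $k\alpha \leq 1$) gives $\text{W}(n\eta_1) = \exp\{\frac{k\lambda p r}{1-p}\} + o(1)$, and substituting this bound yields the claim.

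I do not anticipate a genuine obstacle in this lemma; it is essentially a one-line consequence of monotonicity once Lemma 4.2 is available. The only two points that need a word of care are the non-integrality of the cutoff $n\eta_1$ and the positivity of $f$. The former is harmless because I bound each integer term by the value of the increasing real-variable function $\text{W}$ at the real point $n\eta_1$, which dominates its value at $\lfloor n\eta_1\rfloor$; the latter holds for all large $n$ as noted above, so monotonicity survives the exponentiation to the power $rm$. Both are immediate in the limit $n\to\infty$.
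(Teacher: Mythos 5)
Your proposal is correct and follows exactly the paper's own argument: monotonicity of $\text{W}(S)=f(S/n)^{rm}$, factoring $\text{W}(n\eta_{1})$ out of the sum, bounding $\sum_{S=0}^{n\eta_{1}}\text{B}(S)\leq 1$ since $\text{B}$ is the $\mathrm{Binomial}(n,1/d)$ mass function, and invoking Lemma 4.2. The extra details you supply (the derivative computation showing $f'\geq 0$, positivity of $f$ for large $n$, and the treatment of the non-integer cutoff $n\eta_{1}$) are merely explicit justifications of steps the paper takes for granted.
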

\begin{proof}
\ Note that $\Phi (S)=\text{B}(S)\text{W}(S)$, and $\text{W}(S)$ is monotonically increasing. By Lemma 4.2, we have
\begin{align}
\nonumber
\sum_{S=0}^{n\eta_{1}}\Phi (S) \leq & \text{W}(n\eta_{1}) \sum_{S=0}^{n\eta_{1}} \text{B}(S) \leq \text{W}(n\eta_{1}) \\
\nonumber
= & \exp\left\{\frac{ k\lambda p r }{1-p}\right\} + o(1),
\end{align}
which completes the proof.
\end{proof}

\begin{lemma}\label{lemma44}
Let $\Phi_{c}$ be a function on $[0, n]$ as
\begin{align}
\Phi_{c}(z) = \frac{ \Gamma(n + 1) }{ \Gamma(z + 1)\Gamma(n - z + 1) } \left( \frac{1}{d} \right)^{z}  \left( 1 - \frac{1}{d} \right)^{n-z} f\left( \frac{z}{n} \right)^{ r m }, \label{c1}
\end{align}
where $\Gamma$ is the Gamma Function. Then $\Phi_{c}(\omega) = \Phi(\omega)$, $\omega = 1, 2, \cdots, n$, and $\Phi_{c}^{'}(z) < 0$ if $z \in (n\eta_{1}, n\eta_{2})$.

\end{lemma}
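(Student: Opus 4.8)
The plan is to treat $\Phi_c(z)$ as a smooth extension of $\Phi(S)$ to the real interval $[0,n]$ via the Gamma function, so that $\Phi_c(\omega)=\Phi(\omega)$ holds at integers by the identity $\Gamma(n+1)/(\Gamma(z+1)\Gamma(n-z+1))=\binom{n}{z}$ for integer $z$. The substantive part is the sign of the derivative on $(n\eta_1,n\eta_2)$, and here I would work with the logarithmic derivative rather than $\Phi_c'$ directly, since $\Phi_c(z)>0$ throughout the interval and $\Phi_c'(z)<0 \iff (\ln\Phi_c)'(z)<0$. Writing
\begin{align}
\nonumber
\ln\Phi_c(z) = \ln\Gamma(n+1)-\ln\Gamma(z+1)-\ln\Gamma(n-z+1) - z\ln d + (n-z)\ln\!\left(1-\tfrac1d\right) + rm\ln f\!\left(\tfrac zn\right),
\end{align}
I would differentiate term by term. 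The Gamma contributions produce $-\psi(z+1)+\psi(n-z+1)$ where $\psi=\Gamma'/\Gamma$ is the digamma function; the linear terms give $-\ln d-\ln(1-1/d)=-\ln(d-1)$; and the last term gives $rm f'(z/n)/(n f(z/n))$, with $f'(s)=\frac{p}{1-p}\frac{k s^{k-1}}{1-d^{-k}}$.

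The main work is to show that the sum of these pieces is negative on $(n\eta_1,n\eta_2)$. First I would use the standard digamma estimate $\psi(x)=\ln x+O(1/x)$, so that $-\psi(z+1)+\psi(n-z+1)\approx \ln\frac{n-z}{z}$. Combined with the $-\ln(d-1)$ term, the ``entropic'' part of $(\ln\Phi_c)'$ behaves like $\ln\frac{n-z}{(d-1)z}$, which is strongly negative once $z$ exceeds $n/d=n\eta_1-o(\cdot)$: indeed at $z\approx n/d$ the argument is near $1$, and for $z>n\eta_1$ it drops below $1$, forcing this part to be negative. The task is then to check that this negative entropic term dominates the positive ``energetic'' contribution $rm f'(z/n)/(n f(z/n))$. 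Since $m=n\ln d$ and $f(s)\ge 1$, this energetic term is of order $\ln d\cdot s^{k-1}$ up to constants; on $(n\eta_1,n\eta_2)$ we have $s\le\eta_2$, so it is bounded by roughly $\frac{kpr}{1-p}\eta_2^{k-1}\ln d$. The entropic term, meanwhile, is on the order of $\ln(1/(d s))=\ln d\cdot(1-\alpha^{-1}\cdot(\text{something}))$ type magnitude once $s\ge\eta_1$, so after dividing through by $\ln d$ the inequality reduces to a comparison of constants. This is precisely where the defining conditions on the constants enter: the choice $\alpha_0/\alpha-\mu\eta_2^{k-1}>0$ with $\alpha_0=\frac{(2k-1)\alpha-1}{2(k-1)}$ and $\mu>\frac{kpr}{1-p}$ is engineered so that the leading $\ln d$ coefficient of the entropic decrease strictly exceeds that of the energetic increase across the whole interval.

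The hard part will be making the two leading coefficients line up carefully and uniformly in $z$, because both the entropic and energetic terms vary with $z$ and I must verify the strict inequality at \emph{every} point of $(n\eta_1,n\eta_2)$, not just at the endpoints. Concretely, I expect the cleanest route is to show $(\ln\Phi_c)'(z)<0$ by bounding it above by an expression of the form $\big(\text{const}\cdot\ln d\big)\cdot\big(\mu\,(z/n)^{k-1}-\alpha_0/\alpha\big)+o(\ln d)$, where the worst case over the interval occurs at the right endpoint $z=n\eta_2$ because $(z/n)^{k-1}$ is increasing; then the hypothesis $\alpha_0/\alpha-\mu\eta_2^{k-1}>0$ closes the argument. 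I would need to confirm that the lower-order $O(1/z)$ digamma remainder and the $d^{-k}$ corrections in $f$ are genuinely negligible against the $\ln d$ scale on this range of $z$ (noting $z\ge n\eta_1\gtrsim n/d=n^{1-\alpha}\to\infty$, so $1/z\to 0$), which is routine but must be stated to justify dropping them.
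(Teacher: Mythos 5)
Your setup matches the paper's: the integer agreement via $\Gamma(n+1)/(\Gamma(\omega+1)\Gamma(n-\omega+1))={n \choose \omega}$, passing to $(\ln\Phi_c)'$, the digamma decomposition $-\psi(z+1)+\psi(n-z+1)-\ln(d-1)+\frac{kpr}{(1-p)(1-d^{-k})f(s)}s^{k-1}\ln d$, and the digamma asymptotics are all exactly how the paper begins. The gap is in your proposed closing step. You claim a uniform bound of the form $(\ln\Phi_c)'(z)\le \big(\mathrm{const}\cdot\ln d\big)\big(\mu (z/n)^{k-1}-\alpha_0/\alpha\big)+o(\ln d)$, with the worst case at the right endpoint $z=n\eta_2$. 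This bound is \emph{false} on the left part of the interval. Near $z=n\eta_1$ we have $s\approx 1/d$, so the entropic part is $A(z)-\ln(d-1)\approx \ln\frac{1-s}{(d-1)s}\approx -\ln\big(1+\tfrac{\lambda}{n^{1-k\alpha}\ln d}\big)\approx -\frac{\lambda}{n^{1-k\alpha}\ln d}$, which tends to $0$ (recall the proof only needs to cover $k\alpha\le 1$). It is nowhere near $-C(\alpha_0/\alpha)\ln d+o(\ln d)$; your claimed majorant would force a quantity tending to $0$ to be below a quantity tending to $-\infty$. In other words, just above $n\eta_1$ the entropic decrease is \emph{not} ``strongly negative'': both the entropic advantage and the energetic term $\mu s^{k-1}\ln d$ vanish there, and the sign of their sum is decided by a rate comparison, not by a comparison of constants. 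A telltale sign of the gap is that your argument never uses the hypothesis $(2k-1)\alpha>1$, which is essential (Claim 5.2 of the paper shows the second moment method fails without it).

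What the paper does, and what your argument is missing, is a two-scale split of $(n\eta_1,n\eta_2)$ at an intermediate point $n\eta_m$ with $\eta_m=n^{-\alpha_1}$, $\alpha_1=\frac{1-\alpha}{2(k-1)}$. On $[\eta_1,\eta_m]$ one bounds the entropic part by its value at the left endpoint, $\le -\frac{\lambda+o(1)}{n^{1-k\alpha}\ln d}$, and the energetic part by its value at the right endpoint, $\le \mu\ln d\cdot n^{-(k-1)\alpha_1}$; the inequality $(k-1)\alpha_1>1-k\alpha$, which is \emph{equivalent} to $(2k-1)\alpha>1$, makes the energetic term negligible against the tiny entropic decrease, so the derivative is negative. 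Only on $[\eta_m,\eta_2]$ does your picture apply: there $A(z)-\ln(d-1)\le -(\alpha_0/\alpha+o(1))\ln d$ with $\alpha_0=\alpha-\alpha_1$, and the hypothesis $\alpha_0/\alpha-\mu\eta_2^{k-1}>0$ closes the argument. Note also that a single ``worst case at both endpoints'' bound over the whole interval cannot work even in principle: the weakest entropic bound (at $n\eta_1$) is $\approx 0$ while the strongest energetic bound (at $n\eta_2$) is of order $\ln d$, so their sum is positive. The interval must be subdivided so that, on each piece, the available entropic decrease dominates the local energetic maximum; choosing the cut $\eta_m$ so that both comparisons go through is the real content of the lemma.
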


\begin{proof}\ Let $s = z/n$, and take the logarithm of both sides of (\ref{c1}), then the derivative works out to be
\begin{align}
\left( \ln\Phi_{c} \right)^{'} = \frac{ \Phi_{c}^{'} }{ \Phi_{c} } = \text{A}(z) - \ln( d - 1 ) + \frac{ k p r }{(1-p)(1-d^{-k})f(s)} s^{k-1} \ln d, \label{c4}
\end{align}
where $\text{A}(z) = -\frac{ \Gamma^{'}( z + 1 ) }{ \Gamma( z + 1 ) } +\frac{ \Gamma^{'}( n - z + 1 ) }{ \Gamma( n - z + 1 ) }$.

For any positive number $z$,
\begin{align}
-\frac{ \Gamma^{'}(z) }{ \Gamma(z) } = \frac{1}{z} + \gamma + \sum_{i = 1}^{\infty} \left( \frac{1}{ i + z} - \frac{1}{i} \right), \label{c5}
\end{align}
where $\gamma$ is Euler's constant.
For integer $\omega = 0, 1, 2, \ldots$, we have
\begin{align}
-\frac{ \Gamma^{'}(\omega + 1) }{ \Gamma(\omega + 1) } = \gamma - \sum_{i = 1}^{\omega} \frac{1}{i}, \label{c2}
\end{align}
and we know from \cite{a15} that
\begin{align}
-\ln\omega -\frac{1}{2\omega} < \gamma -\sum_{i = 1}^{\omega}\frac{1}{i} < -\ln\omega -\frac{1}{2(\omega+1)}. \label{c3}
\end{align}

By (\ref{c2}) and (\ref{c3}),
\begin{align}
\ln\left( \frac{n}{\omega} - 1 \right) + \text{R}(\omega) < \text{A}(\omega) < \ln\left( \frac{n}{\omega} - 1 \right) + \text{R}(\omega + 1), \label{c6}
\end{align}
where $\text{R}(\omega) = \frac{1}{2} \left( \frac{1}{n - \omega + 1} - \frac{1}{\omega} \right)$, and $\omega = 1, 2, \ldots, n$.

Note that $\mu > \frac{ k p r }{1 - p}$, then for all $z \in (n\eta_{1}, n\eta_{2})$, by (\ref{c4}),
\begin{align}
\left( \ln\Phi_{c} \right)^{'} < \text{A}(z) - \ln (d-1) + \mu s^{k-1}\ln d.\label{13}
\end{align}

Let $\eta_{m} = \frac{1}{n^{\alpha_{1}}}$, where $\alpha_{1}=\frac{1-\alpha}{2(k-1)}$.
Note that $0<\alpha_{1}<\alpha$ and $(k-1)\alpha_{1}>1-k\alpha$.

By (\ref{c5}), $\text{A}(x)$ is decreasing in $(0, \infty)$, thus we can consider $n\eta_{1}$, $n\eta_{m}$ as integers in the following.

(1) If $\eta_{1}\leq s\leq \eta_{m}$, by (\ref{c6}) and ($\ref{13}$), we have
\begin{align}
\nonumber
                    &\left( \ln\Phi_{c} \right)^{'} < \ln\left(\frac{1}{\eta_{1}}-1\right)-\ln (d-1)+\frac{ \mu \ln d }{ n^{(k-1)\alpha_{1}} }\\
\nonumber
=                   &\ln\frac{1-\eta_{1}}{ 1 - \eta_{1}+\frac{ \lambda }{ n^{1-k\alpha} \ln d } }+\frac{ \mu \ln d }{n^{(k-1)\alpha_{1}}} < -\ln \left( 1 + \frac{ \lambda }{ n^{1-k\alpha}\ln d } \right)+\frac{ \mu \ln d }{ n^{(k-1)\alpha_{1}} }\\
\nonumber
=                   &- \frac{ \lambda + o(1) }{ n^{ 1 - k\alpha } \ln d } + \frac{ \mu \ln d }{n^{(k-1)\alpha_{1}}} = - \frac{ \lambda + o(1) }{ n^{1-k\alpha} \ln d} < 0.
\end{align}

(2) If $\eta_{m} \leq s\leq \eta_{2}$, note that $\alpha_{0} = \alpha-\alpha_{1}$, by (\ref{c6}) and ($\ref{13}$), we have
\begin{align}
\nonumber
\left( \ln\Phi_{c} \right)^{'} < &\ln\left( n^{\alpha_{1}}-1 \right) + \text{R}(n) -\ln (d-1)+\mu \eta_{2}^{k-1} \ln d\\
\nonumber
=                                &- \left( \alpha_{0}/\alpha - \mu \eta_{2}^{k-1} + o(1) \right) \ln d < 0.
\end{align}

This finishes the proof of Lemma \ref{lemma44}.
\end{proof}

\begin{lemma}\label{lemma45}
\ ${n \choose ns}=o(\psi(s)^{n})$, \emph{provided that} $ns(1-s)\rightarrow\infty$, \emph{where} $\psi (s)=1/(s^{s}(1-s)^{1-s})$.

\end{lemma}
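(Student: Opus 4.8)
The plan is to apply Stirling's formula to each of the three factorials in the binomial coefficient and track which factors survive. Writing $\omega = ns$, the quantity to bound is ${n \choose \omega} = n!/\big(\omega!\,(n-\omega)!\big)$. Before invoking Stirling I would first observe that the single hypothesis $ns(1-s)\to\infty$ already forces both arguments to infinity: since $s,\,1-s\in(0,1)$ we have $ns > ns(1-s)\to\infty$ and $n(1-s) > ns(1-s)\to\infty$, so $\omega\to\infty$ and $n-\omega\to\infty$. This is exactly what is required for Stirling's formula $m! = \sqrt{2\pi m}\,(m/e)^{m}(1+o(1))$ to apply simultaneously to $n!$, $\omega!$ and $(n-\omega)!$.

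Next I would substitute the three Stirling expansions and simplify. The factors of $e$ contribute $e^{-n}$ in the numerator against $e^{-\omega}e^{-(n-\omega)}=e^{-n}$ in the denominator and cancel; the pure powers of $n$ contribute $n^{n}$ against $n^{\omega}n^{n-\omega}=n^{n}$ and likewise cancel. What remains from the power terms is precisely $1/\big(s^{ns}(1-s)^{n(1-s)}\big) = \psi(s)^{n}$. Collecting the square-root prefactors gives
\begin{align}
\nonumber
\frac{\sqrt{2\pi n}}{\sqrt{2\pi\,ns}\,\sqrt{2\pi\,n(1-s)}} = \frac{1}{\sqrt{2\pi\,ns(1-s)}},
\end{align}
so that ${n \choose ns} = \big(1+o(1)\big)\,\psi(s)^{n}/\sqrt{2\pi\,ns(1-s)}$. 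The conclusion then follows immediately: since $ns(1-s)\to\infty$, the prefactor $1/\sqrt{2\pi\,ns(1-s)}\to 0$, and therefore ${n \choose ns}=o(\psi(s)^{n})$.

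I do not expect a serious obstacle here; the only point requiring care is the justification that Stirling applies, i.e.\ that $ns\to\infty$ and $n(1-s)\to\infty$, both of which are consequences of the stated hypothesis. If one wished to sidestep any question of uniformity in $s$, one could instead invoke the non-asymptotic two-sided bounds $\sqrt{2\pi m}\,(m/e)^{m} \leq m! \leq e\sqrt{m}\,(m/e)^{m}$ and obtain the same order $\psi(s)^{n}/\sqrt{ns(1-s)}$ with explicit constants, which is all that the later arguments of the paper need.
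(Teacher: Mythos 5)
Your proof is correct and is essentially identical to the paper's: both apply Stirling's formula to ${n \choose ns}$ after noting that the hypothesis $ns(1-s)\rightarrow\infty$ forces $ns\rightarrow\infty$ and $n(1-s)\rightarrow\infty$, and both conclude from the vanishing prefactor $1/\sqrt{2\pi ns(1-s)}$. Your write-up just spells out the cancellations and the one-line justification of applicability in more detail.
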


\begin{proof}\ Note that $ns(1-s)\rightarrow \infty$ is equivalent to $ns\rightarrow\infty$ and $n(1-s)\rightarrow\infty$,
then by Stirling's formula, we have
\begin{align}
\nonumber
{n \choose ns}=        &\frac{1+o(1)}{\sqrt{2\pi ns(1-s)}}\frac{n^{n}}{(ns)^{ns}(n-ns)^{n-ns}}=o(\psi(s)^{n}).
\end{align}
Thus, the proof is finished.
\end{proof}

\begin{lemma}\label{lemma46}
\ $\Phi(n\eta_{1})= o\left( \exp\left\{ - \left( \frac{ \lambda^{ 2 } }{2} + o(1) \right)\frac{n^{(2k-1)\alpha-1}}{(\ln d)^{2}} \right\} \right)$.
\end{lemma}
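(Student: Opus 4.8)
The plan is to factor $\Phi(n\eta_1) = \text{B}(n\eta_1)\,\text{W}(n\eta_1)$ and to observe, via Lemma 4.2, that the weight $\text{W}(n\eta_1) = \exp\{k\lambda p r/(1-p)\} + o(1)$ is bounded. Hence all of the decay must be carried by the binomial mass $\text{B}(n\eta_1) = \binom{n}{n\eta_1}(1/d)^{n\eta_1}(1-1/d)^{n-n\eta_1}$, and it suffices to prove $\text{B}(n\eta_1) = o(\exp\{-(\frac{\lambda^2}{2}+o(1))\,n^{(2k-1)\alpha-1}/(\ln d)^2\})$. Conceptually, $n\eta_1$ lies a growing number of standard deviations above the mean $n/d$ of this binomial: the displacement is $n\delta$ with $\delta = \lambda/(n^{1-(k-1)\alpha}\ln d)$, while the standard deviation is of order $\sqrt{n/d}$, so $(n\delta)^2/(2\cdot n/d)$ is exactly the target exponent. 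The estimate we are after is therefore the Gaussian-tail value $\exp\{-\tfrac12(\text{displacement}/\text{std.\ dev.})^2\}$, and the real work is to certify that the binomial mass is genuinely that small.

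First I would apply Lemma 4.5 to $\binom{n}{n\eta_1}$. Its hypothesis $n\eta_1(1-\eta_1)\to\infty$ holds because $n\eta_1 \ge n/d = n^{1-\alpha}\to\infty$ (the standing case $k\alpha\le 1$ forces $\alpha<1$) and $1-\eta_1\to 1$. Introducing the relative entropy $I(s) = s\ln(sd) + (1-s)\ln\frac{1-s}{1-1/d}$, a direct computation shows $\psi(s)^{n}(1/d)^{ns}(1-1/d)^{n(1-s)} = \exp\{-nI(s)\}$, so multiplying the $o(\psi(s)^n)$ bound of Lemma 4.5 by the positive factor $(1/d)^{ns}(1-1/d)^{n(1-s)}$ yields $\text{B}(n\eta_1) = o(\exp\{-nI(\eta_1)\})$.

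It then remains to evaluate $nI(\eta_1)$. Since $I(1/d)=0$, $I'(1/d)=0$, and $I''(s)=1/(s(1-s))$, Taylor's theorem with Lagrange remainder gives $I(\eta_1) = \delta^2/(2\,\xi(1-\xi))$ for some $\xi\in(1/d,\eta_1)$. The decisive fact is that $\eta_1 d = 1 + \delta d = 1 + \lambda n^{k\alpha-1}/\ln d \to 1$, which holds precisely because $k\alpha\le 1$; consequently $\xi d\to 1$ as well, so $\xi(1-\xi) = (1/d)(1+o(1))$ and $I(\eta_1) = \tfrac12 d\delta^2(1+o(1))$. Substituting $d=n^\alpha$ and $\delta = \lambda/(n^{1-(k-1)\alpha}\ln d)$ gives $nI(\eta_1) = (\frac{\lambda^2}{2}+o(1))\,n^{(2k-1)\alpha-1}/(\ln d)^2$, and combining this with $\text{W}(n\eta_1)=O(1)$ closes the argument.

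The main obstacle is controlling the remainder of the relative-entropy expansion when the success probability $1/d\to 0$: naively the cubic term of $I$ near the mean carries a factor $I'''\approx -d^2$, and $\tfrac16 d^2\delta^3\cdot n$ could a priori dominate the quadratic contribution $\tfrac12 d\delta^2\cdot n$. What rescues the estimate is that $\eta_1$ is only a vanishing relative perturbation of the mean, $\eta_1 d\to 1$ (equivalently $d\delta\to 0$), which is exactly where the assumption $k\alpha\le 1$ enters; the cubic-to-quadratic ratio is of order $d\delta\to 0$, and all higher-order corrections collapse into the $o(1)$ inside the exponent. I would therefore present the divergence estimate through the single Lagrange-remainder identity $I(\eta_1)=\delta^2/(2\,\xi(1-\xi))$ with $\xi(1-\xi)=(1/d)(1+o(1))$, rather than an explicit term-by-term series, precisely so as to sidestep any convergence issue coming from $1/d\to 0$.
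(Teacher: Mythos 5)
Your proposal is correct, and its skeleton coincides with the paper's: both factor $\Phi(n\eta_1)=\text{B}(n\eta_1)\text{W}(n\eta_1)$, dispose of $\text{W}(n\eta_1)=O(1)$ by Lemma 4.2, and reduce everything to showing that $\psi(\eta_1)^n(1/d)^{n\eta_1}(1-1/d)^{n-n\eta_1}$ equals $\exp\{-(\frac{\lambda^2}{2}+o(1))\,n^{(2k-1)\alpha-1}/(\ln d)^2\}$, with Lemma 4.5 supplying the $o(\cdot)$. Where you differ is in how this exponential is evaluated. The paper proceeds by brute force: it writes the quantity as a product of two explicit factors, $\bigl(1-\frac{\lambda}{n^{1-k\alpha}a_n\ln d}\bigr)^{n\eta_1}\bigl(1+\frac{\lambda}{n^{1-(k-1)\alpha}b_n\ln d}\bigr)^{n}$ with $a_n=\eta_1(d-1)$, $b_n=1-\eta_1$, then bounds the logarithm term by term using $\ln(1-u)\le -u-\frac{u^2}{2}$ and $\ln(1+u)\le u$, and observes that the first-order contributions cancel ($\frac{\lambda}{b_n}-\frac{\lambda}{a_n}$ collapses into the quadratic term) while $a_n,b_n\to 1$. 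You instead recognize the whole expression as $\exp\{-nI(\eta_1)\}$ with $I$ the relative entropy, and get the exact identity $I(\eta_1)=\delta^2/(2\xi(1-\xi))$ from the Lagrange remainder, after which the only analytic input is $\xi(1-\xi)=(1+o(1))/d$, i.e.\ $d\delta\to 0$. The two routes use the hypothesis $k\alpha\le 1$ in the same place --- it is what makes the paper's $a_n\to1$ and your $\eta_1 d\to 1$ --- but yours buys a cleaner argument: the cancellation of first-order terms is automatic ($I'(1/d)=0$) rather than verified by hand, no cubic or higher terms ever appear, and the bookkeeping with the auxiliary sequences $a_n,b_n$ disappears. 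The paper's computation is more elementary (only logarithm inequalities) but correspondingly more fragile to follow; your Lagrange-remainder formulation makes it transparent that the bound is exactly the Gaussian tail heuristic $\exp\{-\frac{1}{2}(\text{displacement}/\text{std.\ dev.})^2\}$. Two minor points you share with the paper rather than fix: $n\eta_1$ is treated as an integer when invoking Lemma 4.5, and the $O(1)$ factor from $\text{W}$ is absorbed using $(2k-1)\alpha>1$ (so that the exponent diverges); both are harmless and handled the same way implicitly in the original.
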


\begin{proof}
\ Let $a_{n} - \frac{ \lambda }{ n^{1 - k\alpha} \ln d } = b_{n} = 1 - \eta_{1}$. Then
\begin{align}
\nonumber
     &\psi(\eta_{1})^{n}\left(\frac{1}{d}\right)^{n\eta_{1}}\left(1-\frac{1}{d}\right)^{n-n\eta_{1}}\\
\nonumber
=    &\left( 1 - \frac{ \lambda }{ n^{1 - k\alpha}a_{n} \ln d } \right)^{ n \eta_{1} } \left( 1 + \frac{ \lambda }{ n^{ 1 - (k-1)\alpha } b_{n} \ln d } \right)^{n}\\
\nonumber
=    &\exp\bigg\{ n\eta_{1} \ln \left( 1 - \frac{ \lambda }{ n^{ 1 - k\alpha } a_{n} \ln d } \right) + n\ln \left( 1 + \frac{ \lambda }{ n^{ 1 - (k-1)\alpha } b_{n} \ln d } \right) \bigg\},
\end{align}
where
\begin{align}
\nonumber
&n\eta_{1}\ln \left( 1 - \frac{ \lambda }{ n^{ 1 - k\alpha } a_{n} \ln d } \right) + n \ln \left( 1+\frac{ \lambda }{ n^{ 1 - (k-1)\alpha } b_{n} \ln d } \right)\\
\nonumber
\leq               &-n \eta_{1} \left( \frac{ \lambda }{ n^{ 1 - k\alpha } a_{n} \ln d } + \frac{ \lambda^{2} }{ 2n^{ 2(1 - k\alpha) } a_{n}^{2} (\ln d)^{2} } \right) + \frac{ n\lambda }{ n^{ 1 - (k-1)\alpha } b_{n} \ln d }\\
\nonumber
\leq               &\left( \frac{ \lambda }{ b_{n} } - \frac{ \lambda }{ a_{n} } \right) \frac{ n^{ ( k - 1 )\alpha } }{ \ln d } - \left( \frac{ \lambda^{2} }{ a_{n} } + \frac{ \lambda^{2} }{ 2a_{n}^{2} } \right) \frac{ n^{ ( 2k - 1 )\alpha - 1 } }{(\ln d)^{2}}\\
\nonumber
=                  & - \frac{ \lambda^{2} }{2}\left( \frac{2}{a_{n}}+\frac{1}{a_{n}^{2}}-\frac{2}{a_{n} b_{n}} \right) \frac{ n^{(2k-1)\alpha-1} }{ (\ln d)^{2} }
= - \left( \frac{ \lambda^{2} }{2} + o(1) \right)\frac{ n^{(2k-1)\alpha-1} }{ (\ln d)^{2} }.
\end{align}
Thus, the Lemma follows immediately from Lemmas \ref{lemma42} and \ref{lemma45}.
\end{proof}

\begin{lemma}\label{lemma47}
\ $\sum_{n\eta_{1}}^{n \eta_{2}}\Phi (S)=o(1)$.

\end{lemma}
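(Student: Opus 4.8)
The plan is to exploit the monotonicity established in Lemma~\ref{lemma44} to collapse the sum to its largest term, and then to feed that term into the sharp estimate of Lemma~\ref{lemma46}. First I would recall that by Lemma~\ref{lemma44} the continuous interpolant $\Phi_{c}$ satisfies $\Phi_{c}^{'}(z) < 0$ on the interval $(n\eta_{1}, n\eta_{2})$, so $\Phi_{c}$ is strictly decreasing there, while $\Phi_{c}(\omega) = \Phi(\omega)$ at every integer $\omega$. Consequently, for every integer $S$ with $n\eta_{1} \leq S \leq n\eta_{2}$ we have $\Phi(S) = \Phi_{c}(S) \leq \Phi_{c}(n\eta_{1})$, where (in keeping with the integer convention used in the proof of Lemma~\ref{lemma44}) $\Phi_{c}(n\eta_{1})$ is identified with $\Phi(n\eta_{1})$. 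Thus the leftmost term dominates the whole sum.

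Since the number of integers in $[n\eta_{1}, n\eta_{2}]$ is at most $n\eta_{2} - n\eta_{1} + 1 \leq n$, this monotonicity immediately yields
\begin{align}
\nonumber
\sum_{S = n\eta_{1}}^{n\eta_{2}} \Phi(S) \leq (n\eta_{2} - n\eta_{1} + 1)\,\Phi(n\eta_{1}) \leq n\,\Phi(n\eta_{1}).
\end{align}
It therefore suffices to verify that $n\,\Phi(n\eta_{1}) = o(1)$. For this I would substitute the bound of Lemma~\ref{lemma46}, giving
\begin{align}
\nonumber
n\,\Phi(n\eta_{1}) = o\left( n \exp\left\{ -\left( \frac{\lambda^{2}}{2} + o(1) \right) \frac{n^{(2k-1)\alpha - 1}}{(\ln d)^{2}} \right\} \right).
\end{align}

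Here the hypothesis $(2k-1)\alpha > 1$ does all the work: the exponent $(2k-1)\alpha - 1$ is a fixed positive constant, so $n^{(2k-1)\alpha - 1}$ is a genuine positive power of $n$, whereas $(\ln d)^{2} = \alpha^{2}(\ln n)^{2}$ is only polylogarithmic. Hence $n^{(2k-1)\alpha - 1}/(\ln d)^{2} \to \infty$ faster than any power of $\ln n$, the exponential factor decays super-polynomially, and it overwhelms the linear prefactor $n$. Therefore $n\,\Phi(n\eta_{1}) \to 0$, and the lemma follows.

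The main obstacle is genuinely conceptual rather than computational: one must be certain that the monotonicity of Lemma~\ref{lemma44} holds uniformly across the entire range so that every integer term is controlled by the single value $\Phi(n\eta_{1})$, and that the crude count of at most $n$ terms is truly dwarfed by the decay rate of Lemma~\ref{lemma46} — which hinges precisely on $(2k-1)\alpha - 1$ being strictly positive, the defining feature of the relaxed regime. Beyond Lemmas~\ref{lemma44} and \ref{lemma46} no further delicate estimate is required.
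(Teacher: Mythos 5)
Your proposal is correct and is exactly the argument the paper intends: the paper's proof of this lemma simply says it follows from Lemmas \ref{lemma44} and \ref{lemma46}, i.e., use the monotonicity of $\Phi_{c}$ to bound every term by $\Phi(n\eta_{1})$, multiply by the at most $n$ terms, and let the super-polynomial decay from Lemma \ref{lemma46} (available since $(2k-1)\alpha>1$) absorb the factor $n$. You have merely written out the "straightforward" details, including the correct handling of the integer convention for $n\eta_{1}$.
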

\begin{proof}\ By Lemmas \ref{lemma44} and \ref{lemma46}, the proof  is straightforward.
\end{proof}

\begin{lemma}\label{lemma48}
Let $k \geq \frac{\tau\ln \tau}{\tau - 1}$ and let $\varphi(s) = r\ln \left( 1 + \frac{p}{1-p} s^{k} \right) - s$,
then $\Phi( ns ) \le \exp\{ -( \beta + o(1) ) m  \}$ \ holds uniformly for all $\eta_{2} \leq s \leq \eta_{3}$,
where $- \beta = \max_{\eta_{2} \leq s \leq \eta_{3}} \varphi(s) < 0$.

\end{lemma}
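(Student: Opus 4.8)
The plan is to take logarithms in $\Phi(ns) = \text{B}(ns)\text{W}(ns)$, show that uniformly for $s\in[\eta_2,\eta_3]$ the exponent equals $m\varphi(s)$ up to an additive $o(m)$, and then prove the purely analytic fact that $\varphi$ is strictly negative on $[\eta_2,\eta_3]$ precisely because $k\geq\frac{\tau\ln\tau}{\tau-1}$. Writing $s=S/n$ for an integer $S\in[n\eta_2,n\eta_3]$,
\[
\ln\Phi(ns) = \ln\binom{n}{ns} - ns\ln d + (n-ns)\ln\!\left(1-\tfrac1d\right) + rm\ln f(s),
\]
and since $m=n\ln d$ the second term is exactly $-sm$, the dominant linear-in-$m$ contribution from $\text{B}$.

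For the remaining terms I would show each is either $o(m)$ or reproduces $\varphi$. By Lemma 4.5 and Stirling's formula, $\binom{n}{ns}=o(\psi(s)^n)$ uniformly on $[\eta_2,\eta_3]$ (here $ns(1-s)\to\infty$ since $s$ is bounded away from $0$ and $1$), so $\ln\binom{n}{ns} = n\big(-s\ln s-(1-s)\ln(1-s)\big)+O(\ln n)=O(n)=o(m)$; likewise $(n-ns)\ln(1-1/d)=O(n/d)=o(m)$. For the $\text{W}$ term, using $\frac{p}{1-p}=\tau-1$ one checks $f(s)=1+(\tau-1)s^k+O(d^{-k})$ uniformly, with $f(s)$ bounded away from $0$, so $rm\ln f(s)=rm\ln\big(1+(\tau-1)s^k\big)+O(md^{-k})=rm\ln\big(1+(\tau-1)s^k\big)+o(m)$. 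Collecting the pieces yields $\ln\Phi(ns)=m\varphi(s)+o(m)$ uniformly; hence once $-\beta:=\max_{[\eta_2,\eta_3]}\varphi(s)<0$ is known, we get $\Phi(ns)\leq\exp\{-\beta m+o(m)\}=\exp\{-(\beta+o(1))m\}$ uniformly, as claimed.

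It remains to prove $\varphi<0$ on $[\eta_2,\eta_3]$, the step that consumes the hypothesis on $k$. Since $\ln(1+(\tau-1)s^k)>0$ for $s\in(0,1)$, the map $r\mapsto\varphi(s)$ is increasing, so it suffices to treat $r=r_{cr}=1/\ln\tau$ and show $\varphi_{r_{cr}}(s)\leq0$ on $[0,1]$; for $r<r_{cr}$ the inequality becomes strict on $(0,1)\supseteq[\eta_2,\eta_3]$. Writing $\varphi_{r_{cr}}'(s)=\frac{1}{\ln\tau}\big(G(s)-\ln\tau\big)$ with $G(s)=\frac{k(\tau-1)s^{k-1}}{1+(\tau-1)s^k}$, a direct computation gives $G'(s)\propto s^{k-2}\big((k-1)-(\tau-1)s^k\big)$, so $G$ is unimodal on $(0,\infty)$ (increasing up to $s^*=((k-1)/(\tau-1))^{1/k}$, then decreasing), with $G(0)=0$ and $G(1)=\frac{k(\tau-1)}{\tau}$. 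The hypothesis $k\geq\frac{\tau\ln\tau}{\tau-1}$ is exactly $G(1)\geq\ln\tau$. Combining $G(0)=0<\ln\tau\leq G(1)$ with unimodality forces $G-\ln\tau$ to change sign exactly once on $(0,1)$, from negative to nonnegative, so $\varphi_{r_{cr}}$ decreases then increases on $[0,1]$, a single ``valley'' whose maximum is attained at the endpoints, where $\varphi_{r_{cr}}(0)=0$ and $\varphi_{r_{cr}}(1)=\frac{\ln\tau}{\ln\tau}-1=0$. Hence $\varphi_{r_{cr}}\leq0$ on $[0,1]$, giving $\varphi<0$ on the compact set $[\eta_2,\eta_3]$ and $\beta>0$.

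The main obstacle is this last sign analysis: proving globally on $[0,1]$ (not merely near the endpoints) that $\varphi_{r_{cr}}\leq0$, and seeing that $k\geq\frac{\tau\ln\tau}{\tau-1}$ is exactly what makes $s=1$ an endpoint maximum rather than letting $\varphi$ rise above $0$ in the interior. The unimodality of $G$ is what collapses the a priori possibility of two interior critical points into the clean valley picture; a direct attack on $\varphi_r$ for $r<r_{cr}$ fails the endpoint derivative test (for small $r$ one has $\varphi_r'(1)<0$), which is why the reduction to $r=r_{cr}$ by monotonicity in $r$ is the cleanest route. Verifying that all $o(m)$ estimates are uniform over $[\eta_2,\eta_3]$ is routine but essential, since the conclusion is asserted uniformly in $s$.
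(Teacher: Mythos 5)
Your proof is correct, and its first half coincides in substance with the paper's: the paper likewise reduces the lemma to the uniform estimate $\Phi(ns)\le\exp\{(\varphi(s)+o(1))m\}$, though it gets there faster by bounding ${n \choose ns}\le{n \choose n/2}=\exp\{o(m)\}$ and absorbing the $d^{-k}$ corrections into the $o(1)$, where you invoke Stirling (via Lemma 4.5) and track each error term explicitly. Where you genuinely diverge is the sign analysis of $\varphi$. The paper exponentiates the inequality $r_{cr}\ln\left(1+\frac{p}{1-p}s^{k}\right)-s<0$ into $u(s):=\tau^{s}-(\tau-1)s^{k}-1>0$, notes $u(0)=u(1)=0$, and controls the critical points of $u$ geometrically: taking logarithms in $u'(s)=0$ shows a straight line meets the concave curve $\ln s$ at most twice, Rolle's theorem gives at least one critical point, and a two-case analysis shows $u>0$ on $(0,1)$ exactly when $u'(1)\le 0$, i.e. $k\ge\frac{\tau\ln\tau}{\tau-1}$. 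You instead stay with $\varphi_{r_{cr}}$ itself, write $\varphi_{r_{cr}}'=\frac{1}{\ln\tau}\left(G-\ln\tau\right)$ with $G(s)=\frac{k(\tau-1)s^{k-1}}{1+(\tau-1)s^{k}}$, prove unimodality of $G$ by explicit differentiation, and observe that the hypothesis on $k$ is exactly $G(1)\ge\ln\tau$, which yields the valley shape with endpoint maxima $\varphi_{r_{cr}}(0)=\varphi_{r_{cr}}(1)=0$. The two arguments establish the same equivalence through the same endpoint mechanism at $s=1$; yours replaces Rolle plus the line-versus-concave-curve counting by a self-contained monotonicity computation and makes the reduction to $r=r_{cr}$ explicit via monotonicity in $r$ (a step the paper performs implicitly), while the paper's formulation displays the ``if and only if'' character of the condition on $k$, which it reuses for the optimality statement in Claim 5.3. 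One cosmetic caveat: in the configuration where $G$ is increasing on all of $[0,1]$ with $G(1)=\ln\tau$, your claim of a sign change strictly inside $(0,1)$ fails literally; but then $\varphi_{r_{cr}}$ is nonincreasing with $\varphi_{r_{cr}}(0)=0$, so the conclusion $\varphi_{r_{cr}}\le 0$ persists (and in fact this configuration is vacuous, since $\frac{\tau\ln\tau}{\tau-1}<\tau$ would force $k\ge\tau$ and $k<\tau$ simultaneously), so this is not a gap in substance.
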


\begin{proof} Note that ${n \choose ns} \leq {n \choose n/2} = \exp\{o( m )\}$. Then
\begin{align}
\Phi( ns )\leq \exp\left\{\left( \varphi(s) + o(1) \right)m \right\} \label{27}
\end{align}
holds uniformly for all $\eta_{2} \leq s \leq \eta_{3}$.

We can rewrite $r_{cr} \ln \left(1 + \frac{p}{1 - p} s^{k}\right) - s < 0$ as $u(s) > 0$ for $s\in(0, 1)$,
where $u(s) = \tau^{s} - (\tau - 1) s^{k} - 1$.

Now we consider the equation $u^{'}(s) = \tau^{s} \ln\tau - k(\tau - 1) s^{k - 1} = 0$.

Note that $u(0) = u(1) = 0$, then by Rolle's Theorem, the equation $u^{'}(z) = 0$ has at least one solution in $(0, 1)$.

On the other hand, we can rewrite the equation $u^{'}(s) = 0$ as
\begin{align}
\nonumber
\frac{\ln \tau}{k - 1}s + \frac{\ln\ln \tau - \ln k - \ln (\tau - 1)}{k - 1} = \ln s.
\end{align}
In rectangular plane coordinate system, the left side of the equation is a straight line, and the right side is a concave curve,
thus they have at most two points of intersection.

Note that $\tau > 1$, so $u^{'}(0) = \ln\tau > 0$ and $\lim_{s\rightarrow + \infty}u^{'}(s) = +\infty$.
Then all the possible cases of $u^{'}(s)$ in $(0,1)$ are as follows.

$(1)$\ \ \ $u^{'}(s) = 0$ has one solution $s_{0}$ in $(0, 1)$
\begin{center}
\begin{tabular}{ccc}
$s$               &$(0,\  s_{0})$      &$(s_{0},\  1)$\\
\hline
$u^{'}$               &$>0$         &$<0$
\end{tabular}
\end{center}

$(2)$\ \ \ $u^{'}(s) = 0$ has two solutions $s_{1}, s_{2}$ $(s_{1} < s_{2})$ in $(0,1)$
\begin{center}
\begin{tabular}{cccc}
$s$               &$(0,\  s_{1})$      &$(s_{1},\  s_{2})$         &$(s_{2},\  1)$\\
\hline
$u^{'}$               &$>0$         &$<0$                &$>0$
\end{tabular}
\end{center}
Thus, $u(s) > 0$ in $(0, 1)$ is equivalent to $u^{'}(1) \leq 0$, i.e., $k \geq \frac{\tau\ln \tau}{\tau - 1}$.
Then $- \beta = \max_{\eta_{2} \leq s \leq \eta_{3}} \varphi(s) < 0$,
and now the proof of the Lemma follows by ($\ref{27}$).
\end{proof}

\begin{lemma}\label{lemma49}
 $\sum_{S=n \eta_{2}}^{n \eta_{3}}\Phi (S)=o(1)$.

\end{lemma}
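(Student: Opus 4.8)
The plan is to read off the conclusion directly from the uniform estimate of Lemma~\ref{lemma48}. First I would invoke that lemma, which supplies a single exponential bound $\Phi(ns) \le \exp\{-(\beta + o(1))m\}$ holding \emph{uniformly} for every $s \in [\eta_{2}, \eta_{3}]$, where $\beta > 0$ is a fixed constant (namely $\beta = -\max_{\eta_2 \le s \le \eta_3}\varphi(s)$) and the $o(1)$ term is independent of $s$. In particular this bound applies to each integer $S$ with $n\eta_{2} \le S \le n\eta_{3}$, so every summand in $\sum_{S=n\eta_{2}}^{n\eta_{3}}\Phi(S)$ is at most $\exp\{-(\beta+o(1))m\}$.

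Next I would simply count the terms. The index $S$ ranges over the integers in $[n\eta_{2}, n\eta_{3}]$, of which there are at most $n(\eta_{3} - \eta_{2}) + 1 \le n + 1$. Replacing each summand by the uniform upper bound therefore yields
\begin{align}
\nonumber
\sum_{S=n\eta_{2}}^{n\eta_{3}}\Phi(S) \le (n+1)\exp\{-(\beta+o(1))m\}.
\end{align}
Finally I would compare the two factors. Recalling that $m = n\ln d$ with $d = n^{\alpha}$, so that $m = \alpha n \ln n$, the right-hand side can be written as $\exp\{\ln(n+1) - (\beta + o(1))\alpha n\ln n\}$. Since $\beta$ is a positive constant and $m/\ln n \to \infty$, the exponent tends to $-\infty$, and hence the whole sum is $o(1)$, as claimed.

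There is essentially no serious obstacle here: the lemma is a routine consequence of Lemma~\ref{lemma48}. The only point requiring a little care is that the $o(1)$ appearing in the exponent of Lemma~\ref{lemma48} is uniform in $s$ (which is precisely what the phrase ``holds uniformly'' in that lemma guarantees), so that it may legitimately be pulled outside the finite sum of at most $n+1$ terms. Once that is granted, the polynomial factor $n+1$ is overwhelmed by the exponential decay $\exp\{-\beta m\}$, and the estimate follows at once.
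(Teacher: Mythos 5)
Your proof is correct and is exactly the argument the paper has in mind: the paper's own proof of this lemma is simply the one-line remark that it follows from Lemma~\ref{lemma48}, and you have filled in the same routine details (uniform exponential bound, at most $n+1$ terms, polynomial factor absorbed by $\exp\{-(\beta+o(1))m\}$ with $m=\alpha n\ln n$). No discrepancies.
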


\begin{proof}\ By Lemma \ref{lemma48}, the proof of the Lemma is straightforward.
\end{proof}

\begin{lemma}\label{lemma410}
$\sum_{S=n \eta_{3}}^{n}\Phi(S)=o(1)$.
\end{lemma}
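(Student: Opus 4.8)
The plan is to treat the interval $[n\eta_{3}, n]$ exactly as the interval $[n\eta_{2}, n\eta_{3}]$ was treated in Lemma \ref{lemma48}, the only genuinely new feature being the behaviour at the right endpoint $s = 1$, where the Stirling estimate of Lemma \ref{lemma45} is no longer available (its hypothesis $ns(1-s)\to\infty$ fails once $S$ is within $O(1)$ of $n$). First I would record the universal bound ${n \choose ns} \leq {n \choose n/2} = \exp\{o(m)\}$, valid for \emph{every} $s \in [0,1]$, because ${n \choose n/2} \leq 2^{n} = \exp\{n\ln 2\}$ while $m = n\ln d = \alpha n \ln n$, so $n = o(m)$. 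This is the replacement for Lemma \ref{lemma45}: here the binomial factor is simply discarded into the $o(m)$ error, which loses nothing relative to $m$.

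Next I would expand $\ln\Phi(ns) = \ln{n \choose ns} + ns\ln(1/d) + (n-ns)\ln(1-1/d) + rm\ln f(s)$ term by term and show that $\ln\Phi(ns) = m\,\varphi(s) + o(m)$ holds uniformly on $[\eta_{3},1]$. Indeed $\ln(1/d) = -m/n$ gives $ns\ln(1/d) = -sm$; the factor $(1-1/d)^{n-ns}$ contributes only $(n-ns)|\ln(1-1/d)| = O(n/d) = O(n^{1-\alpha}) = o(m)$; and on $[\eta_{3},1]$ the quantity $1+\frac{p}{1-p}s^{k}$ is bounded away from $0$ and $\infty$, so $f(s) = \big(1+\frac{p}{1-p}s^{k}\big)\big(1+O(d^{-k})\big)$ uniformly, whence $rm\ln f(s) = rm\ln\big(1+\frac{p}{1-p}s^{k}\big) + O(md^{-k})$ with $md^{-k} = d^{-k}\cdot m = o(m)$. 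Collecting the pieces yields exactly the estimate (\ref{27}), now valid on $[\eta_{3},1]$.

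The decisive point is that $\varphi(s) < 0$ on the whole compact interval $[\eta_{3},1]$. For $s \in [\eta_{3},1)$ this is already contained in the proof of Lemma \ref{lemma48}, where $u(s) > 0$ on $(0,1)$ was shown equivalent to $r_{cr}\ln\big(1+\frac{p}{1-p}s^{k}\big) - s < 0$; since $r < r_{cr}$ and $\ln\big(1+\frac{p}{1-p}s^{k}\big)>0$, this gives $\varphi(s) \leq r_{cr}\ln\big(1+\frac{p}{1-p}s^{k}\big) - s < 0$. The only new case is $s = 1$, which is precisely where the standing hypothesis $r < r_{cr}$ is used: $\varphi(1) = r\ln\tau - 1 < r_{cr}\ln\tau - 1 = 0$. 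By continuity of $\varphi$ on the compact $[\eta_{3},1]$, we obtain $\max_{\eta_{3}\leq s\leq 1}\varphi(s) = -\beta' < 0$ for some $\beta' > 0$.

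Finally I would combine the two estimates: uniformly for $n\eta_{3} \leq S \leq n$, $\Phi(S) \leq \exp\{(\varphi(S/n)+o(1))m\} \leq \exp\{-(\beta'/2)m\}$ for all large $n$, and since there are at most $n$ summands, $\sum_{S=n\eta_{3}}^{n}\Phi(S) \leq n\exp\{-(\beta'/2)m\} = \exp\{\ln n - (\beta'\alpha/2)\,n\ln n\} \to 0$, which is the claim. The main obstacle is entirely the breakdown of the Stirling estimate near $s = 1$; it is dissolved by the cruder bound ${n \choose ns}\leq{n\choose n/2}$ (which costs nothing against $m$ since $m/n\to\infty$), together with the observation that the endpoint value $\varphi(1) = r\ln\tau - 1$ is strictly negative exactly under the assumption $r < r_{cr}$.
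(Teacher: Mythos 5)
Your proof is correct, but it establishes the exponential decay by a different mechanism than the paper does. The paper's own proof is a two-line argument: after the same crude bound ${n \choose S} \leq {n \choose n/2} = \exp\{o(m)\}$ that you use, it simply bounds the two remaining factors \emph{separately} — $f(s) \leq f(1) = \frac{1}{1-p}$ (so $\text{W}(S) \leq \exp\{-r\ln(1-p)\, m\}$) and $\left(\frac{1}{d}\right)^{S} \leq \left(\frac{1}{d}\right)^{n\eta_{3}}$ — yielding $\Phi(S) \leq \exp\{-(r\ln(1-p) + \eta_{3} + o(1))m\}$ uniformly on $[\eta_{3},1]$, and then invokes the condition $r\ln(1-p)+\eta_{3}>0$ that was built into the choice of $\eta_{3}$ at the start of Section 4. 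Note that this condition is exactly where $r<r_{cr}$ enters the paper's argument: an $\eta_{3}<1$ with $\eta_{3} > r\ln\tau$ exists precisely because $r\ln\tau<1$. You instead keep the coupled rate function $\varphi(s) = r\ln\left(1+\frac{p}{1-p}s^{k}\right)-s$, extend the estimate (\ref{27}) of Lemma \ref{lemma48} to $[\eta_{3},1]$, and prove $\sup_{[\eta_{3},1]}\varphi<0$ via the $u(s)>0$ analysis plus the endpoint check $\varphi(1)=r\ln\tau-1<0$ and compactness. Your route gives a sharper pointwise bound and makes the role of $r<r_{cr}$ explicit at $s=1$, but it is heavier: it re-imports the machinery of Lemma \ref{lemma48} (and hence the hypothesis $k\geq\frac{\tau\ln\tau}{\tau-1}$, which the paper's proof of this particular lemma never needs), whereas the paper's decoupled maximization makes the interval $[\eta_{3},1]$ trivial by design — that is the entire purpose of how $\eta_{3}$ was chosen.
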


\begin{proof}\ Note that ${n \choose S} \leq {n \choose n/2} = \exp\{ o( m ) \}$. Then
\begin{align}
\nonumber
\Phi(S)\leq \exp\{- (r \ln(1-p) + \eta_{3} + o(1)) m \}
\end{align}
hold uniformly for all $\eta_{3} \leq s \leq 1$.

Note that $r\ln(1-p) + \eta_{3} > 0$, thus the proof  is finished.
\end{proof}

\section{Proof of Theorems}
{\flushleft\emph{Proof of Theorem \ref{th31}}.}\ Note that $\mathbb{E}[X] = o(1)$, provided that $r > r_{cr}$.
Thus, by the Markov inequality $\text{Pr}[X>0]\leq \mathbb{E}[X]$, ($\ref{3}$) is proved.\qed

{\flushleft
The proof of (\ref{a}) follows from Lemmas 4.3, 4.7, 4.9 and 4.10.

The proof of Theorem 3.2 can be obtained similarly.
}

{\flushleft {\bf Remark 5.1}.}\ We have fully exploited the power of the second moment method in Model RB: In Theorems 3.1 and 3.2, if $(2k-1)\alpha\leq 1$, or $k<\frac{\tau\ln \tau}{\tau-1}$, the second moment method will fail to give nontrivial results.

{\flushleft {\bf Claim 5.2}.}\ $\mathbb{E}[X]^{2} / \mathbb{E}[X^{2}] = o(1)$, \emph{provided that} $(2k-1)\alpha\leq 1$.

{\flushleft {\bf Claim 5.3}.}\ \emph{If} $r$ \emph{is close enough to} $r_{cr}$ $(r < r_{cr})$, \emph{then} $\mathbb{E}[X]^{2} / \mathbb{E}[X^{2}] = o(1)$, \emph{provided that} $k<\frac{\tau\ln \tau}{\tau-1}$.

Let $S_{n}=X_{1}+X_{2}+\cdots+X_{n}$ is the sum of $n$ independent random variables, where, for each $i$,
$\text{Pr}[X_{i}=1] = \text{p}$ and $\text{Pr}[X_{i}=0] = 1 - \text{p}$.
Then $\text{Pr}[S_{n}=i] = {n \choose i} \text{p}^{i} (1 - \text{p} )^{n-i}$, for $i=0, 1, 2, \cdots, n$.

The following theorem can be viewed as a special case of the Central Limit Theorem, and especially when p is a constant, it is
sometimes called the DeMoivre-Laplace Theorem \cite{a5, DeMoivre}.
{\flushleft {\bf Theorem 5.4}}.
\emph{The binomial distribution} $\text{Binomial}(n,\text{p})$ \emph{for} $S_{n}$, \emph{satisfies}, \emph{for two constants} $a$ \emph{and} $b$,
\begin{align}
\nonumber
\lim_{n\rightarrow\infty}\text{Pr}[a\sigma<S_{n} - n\text{p}<b\sigma]=\frac{1}{\sqrt{2\pi}}\int_{a}^{b}e^{- x^{2}/2}dx
\end{align}
\emph{where} $\sigma=\sqrt{n\text{p}(1 - \text{p})}$, \emph{provided that} $n\text{p}(1 - \text{p})\rightarrow \infty$ \emph{as} $n\rightarrow \infty$.

{\flushleft\emph{Proof of Claim 5.2}}.\ Let $\text{p} = 1/d$ and let $0 < \rho < \alpha$ be a constant. Note that $(2k-1)\alpha \leq 1$, then
\begin{align}
\nonumber
\lim_{n\rightarrow\infty}\frac{ n\eta_{1} - n\text{p} }{\sigma} = 0,\ \text{and} \lim_{n\rightarrow\infty}\frac{ n^{1 - \rho} - n\text{p} }{\sigma} = \infty.
\end{align}
Thus, by Theorem 5.4, we have
\begin{align}
\sum_{S=n\eta_{1}}^{n^{1 - \rho}}\text{B}(S)=\frac{1}{2} + o(1). \label{f3}
\end{align}

Note that $\left( f(s) + \frac{ p g(s) }{ (1-p) \left(1-d^{-k}\right) n} \right)^{ r m } = f(s)^{ r m } + o(1)$ holds uniformly for all $s \leq n^{ -\rho}$. By Lemmas 4.1, 4.2, and (\ref{f3}), we have
\begin{align}
\nonumber
\frac{\mathbb{E}[X^{2}]}{\mathbb{E}[X]^{2}}\geq \Big( 1 + o(1) \Big)\sum_{S=n\eta_{1}}^{n^{1 - \rho}}\text{B}(S)\text{W}(S)\geq \left(\frac{1}{2}+o(1)\right)\exp\left\{\frac{ k\lambda p r }{1-p}\right\}.
\end{align}
Let $\lambda\rightarrow\infty$, then $\mathbb{E}[X^{2}] / \mathbb{E}[X]^{2}\rightarrow \infty$, and the proof is finished.   \qed

{\flushleft\emph{Proof of Claim 5.3}}.\ Let $k<\frac{\tau\ln\tau}{\tau-1}$, then there  exists a constant $\theta\in (0, 1)$ such that $r_{cr}\ln(1 + \frac{p}{1-p}\theta^{k}) - \theta > 0$.

Note that $r$ is close enough to $r_{cr}$ $(r < r_{cr})$, thus $r\ln\left( 1 + \frac{p}{1-p}\theta^{k} \right) - \theta > 0$, then
\begin{align}
\nonumber
                          &\text{B}(n\theta) \left( f(\theta) + \frac{ p g(\theta) }{ (1-p) \left(1-d^{-k}\right) n} \right)^{ r m } \\
\nonumber
\geq                      &\left(\frac{1}{d}\right)^{n\theta}\left(1+\frac{p}{1-p}\theta^{k} + o(1) \right)^{ r m }\\
\nonumber
=                         &\exp\left\{\left( r\ln\left( 1 + \frac{p}{1-p}\theta^{k} \right) - \theta + o(1) \right) m \right\}.
\end{align}
Thus, the proof of the Claim is finished. \qed

\section{Conclusions}

In this paper, we exploit the power of the second moment method in proving exact phase transitions of Model RB, and introduce some useful techniques to analyse the limit behavior of the second moment. Our results are the best by using the second moment method in that, it fails to give nontrivial results once the conditions are further relaxed (see Claim 5.2), thus new tools other than the second moment method will be needed to entail better results. To be precise, we show that  the requirement of domain size $d=n^{\alpha}$ in Model RB can be relaxed from $\alpha>\frac1k$ to $\alpha>\frac{1}{2k-1}$. In this way, benchmarks based on the exact phase transitions of model RB can be generated more easily for algorithm research.

For further work, one interesting open problem is whether this requirement can be further relaxed (i.e., $d$ grows at a lower speed)
while exact phase transitions can still be guaranteed.

\end{document}